\documentclass[twocolumn]{autarty}
\usepackage{graphicx}
\usepackage{comment}
\usepackage{arydshln}
\usepackage{mathtools}
\usepackage{graphics} 
\usepackage{epsfig} 
\usepackage{mathptmx} 
\usepackage{times} 
\usepackage{amsmath} 
\usepackage{amssymb}  
\usepackage{enumerate}
\usepackage{multirow}
\usepackage{latexsym}
\usepackage{bm}
\usepackage{dsfont}
\usepackage{multirow}
\usepackage{color}
\usepackage{url}
\usepackage{algorithm}
\usepackage{cite}
\usepackage{algpseudocode}
\newtheorem{remark}{Remark}
\newtheorem{assumption}{Assumption}
\newtheorem{lemma}{Lemma}
\newtheorem{proposition}{Proposition}
\newtheorem{definition}{Definition}
\newtheorem{corollary}{Corollary}
\newtheorem{theorem}{Theorem}

\newcommand{\zxl}[1]{{\color{black}{ #1}}}

{
	\begin{bmatrix}}%
	{\end{bmatrix}
	}

\def\build#1_#2^#3{\mathrel{\mathop{\kern0pt#1}\limits_{#2}^{#3}}}%

\def\min#1{\build{\rm min}_{#1}^{}}
\def\build#1_#2^#3{\mathrel{\mathop{\kern0pt#1}\limits_{#2}^{#3}}}%

\def\BibTeX{{\rm B\kern-.05em{\sc i\kern-.025em b}\kern-.08em
		T\kern-.1667em\lower.7ex\hbox{E}\kern-.125emX}}

\begin{document}
	
	\begin{frontmatter}
		
		\title {Robust Koopman MPC with Sets Updates for Time Delayed Systems$^{\star}$} 
		\vspace{-4mm}
		\thanks[footnoteinfo]{This paper was not presented at any 
			meeting.  } 
	\author[XLZ]{Xinglong Zhang}\ead{zhangxinglong18@nudt.edu.cn},    
	\author[XLZ]{Xinxin Yao}\ead{yxx528383145@163.com},               
	\author[XLZ]{Xin~Xu}\ead{xinxu@nudt.edu.cn},  
    \author[YKZ]{Keyou You}\ead{youky@tsinghua.edu.cn},  
    \author[XLZ]{Dewen~Hu}\ead{dwhu@nudt.edu.cn}  
	\address[XLZ]{College of Intelligence Science and Technology, National University of Defense Technology, Changsha 410073, China.} 
    \address[YKZ]{Department of Automation and BNRist, Tsinghua University, Beijing 100084, China.}

	\begin{keyword}                           
		Koopman operators, model predictive control, time delayed systems, sets updates.
	\end{keyword}                             

	\begin{abstract} 
		Koopman operators have shown significant potential in designing linear model predictive control (MPC) schemes for nonlinear systems on a lifted observable space. Recent advances have tackled the robust Koopman MPC design issue in the presence of modeling errors, relying on the prior estimation of the modeling uncertainty set. However, deriving a robust positively invariant set using a precalculated uncertainty set can be conservative because the uncertainty set bound is time-varying and dependent on the state and control. Additionally, no existing Koopman MPC design has addressed the closed-loop robustness challenge for nonlinear time delayed systems. Thereby, this article presents a robust adaptive Koopman MPC approach with online updates of uncertainty sets for a class of nonlinear time delayed systems. The unknown nonlinear time delayed system is first modeled in a data-driven manner to derive a lifted time delayed Koopman model in the feature space. By analyzing fundamental properties such as controllability and observability, a robust tube-based MPC algorithm is designed for the time delayed Koopman model. The robust adaptive Koopman MPC algorithm with online updates of the uncertainty sets is then presented to reduce conservatism. Closed-loop robustness under exogenous disturbances and asymptotic convergence in the nominal scenario are proven. Finally, numerical examples verify the effectiveness of the proposed approach.
	\end{abstract}
	
\end{frontmatter}

	\section{Introduction}
	Koopman operators, which are typically of infinite dimension, capture the dynamic evolution of state observables, effectively providing a global linearization of nonlinear systems in a lifted observable space~\cite{korda2018linear,strasser2026overview,bevanda2021koopman}. Approximating these Koopman operators with a finite dimension results in a truncated lifted Koopman model, which can serve as a surrogate predictor in the development of model-based controllers, such as model predictive control (MPC)~\cite{peitz2019koopman,otto2021koopman}.\\
	MPC is a model-based, constrained control strategy that reformulates a constrained optimal control problem into a sequence of finite-horizon optimization problems solved online. At each time step, an open-loop control sequence is derived by solving the underlying optimization problem, and only the first control action is implemented. This optimization process is iteratively solved at each subsequent time step following the receding horizon principle.\\
	One of the pioneering works in this field introduced the Koopman operator into MPC as a linear predictor of unknown nonlinear systems~\cite{korda2018linear}. This approach involves lifting the state into a finite observable space through a set of basis functions, with the model parameters derived in a data-driven manner using extended dynamic mode decomposition (EDMD)~\cite{williams2015data,korda2018linear}. However, the resulting Koopman model inevitably introduces non-negligible modeling errors because constructing a finite observable space that is invariant for open-loop forced systems is challenging. Although choosing a considerable number of appropriate basis functions might improve regression precision, this also increases the online computational complexity of MPC.\\
	To address Koopman modeling errors, an offset-free control approach with an external observer was proposed in~\cite{son2020handling}. A previous work presented a robust Koopman MPC design within the framework of tube MPC~\cite{zhang2022robust}, ensuring constraint satisfaction and theoretical guarantees under exogenous disturbances and Koopman modeling errors. 
	Another interesting robust MPC approach using data-driven Koopman operators was developed in~\cite{9867811}. Furthermore, in {\color{black}~\cite{kim2025k}}, an extension to Koopman MPC was addressed for lateral control applications in intelligent vehicles, where stochastic constraints were formulated under the assumption that the uncertainty set is Gaussian distributed.
	It is highlighted that in scenarios under disturbances with unknown distributions, the methods proposed in~\cite{zhang2022robust} and~\cite{9867811} rely on a priori computation of the uncertainty set using estimated Lipschitz constants. However, this precomputed constant uncertainty set can be overly conservative, as it is actually time-varying and dependent on state and control variables. This observation has inspired our robust adaptive Koopman MPC design, which incorporates online updates of uncertainty sets to mitigate conservatism.\\
	Many robotic and biological systems manifest nonlinearities and time delays in their states due to potential delays in material or data transport~\cite{reble2011model,jeong2005constrained}. Among various methodologies for handling time delayed systems, MPC stands out for its capability to manage state and input constraints through online optimization. However, ensuring stability of MPC for nonlinear time delayed systems remains a significant challenge. Previous works, such as \cite{reble2011model} and \cite{raff2007model}, have presented nonlinear MPC approaches for time delayed systems with asymptotic stability guarantees based on the Lyapunov–Krasovskii functional. Additionally, a Lyapunov-based MPC approach was developed for nonlinear systems with delayed measurements in \cite{liu2009lyapunov}. Despite these advancements, robust MPC design for nonlinear time delayed systems under exogenous disturbances remains unresolved.\\
	In this article, we tackle the control of nonlinear time delayed systems through a robust Koopman MPC approach, transforming the nonlinear control problem into a lifted linear control problem. 
Our contributions are summarized as follows. 
	{\color{black}(i) We develop a robust Koopman-based MPC framework for unknown nonlinear systems with time delays, together with rigorous closed-loop guarantees. The key contribution lies in formulating nonlinear time-delay dynamics within a tractable robust tube MPC framework via Koopman lifting, enabling robust constraint satisfaction and recursive feasibility despite model uncertainty and delayed states. To the best of our knowledge, Koopman-based robust MPC for nonlinear time-delay systems with such closed-loop guarantees has not been previously investigated. 
    (ii) We further propose an adaptive Koopman-based MPC scheme that enables online updates of the uncertainty bounds and the associated robust positively invariant (RPI) set. Compared with existing robust Koopman MPC approaches, such as~\cite{zhang2022robust}, which rely on offline computation of fixed RPI sets, the proposed adaptive mechanism reduces conservatism by adjusting the tube size online according to time-varying uncertainty estimated from real state measurements. Moreover, we have proven the recursive feasibility and robustness properties for robust Koopman MPC under online sets updates. } \\
	The rest of the paper is organized as follows. Section~\ref{sec:cpf} introduces the control problem and provides preliminaries on Koopman operators. Section~\ref{sec:design} presents the Koopman model and the robust Koopman MPC design for time delayed systems. In Section~\ref{sec:adaptive}, the adaptive Koopman MPC extension is introduced with online updates of uncertainty sets. Section~\ref{sec:simulation} demonstrates the numerical simulation results. Finally, Section~\ref{sec:con} concludes the paper.\\ 
	\textbf{Notation:} We use $\mathbb{R}$ and $\mathbb{R}^{+}$ to denote the sets of real numbers and positive real numbers, respectively; $\mathbb{R}^{n}$ to denote the $n$-dimensional Euclidean space of real vectors; and $\mathbb{R}^{n \times m}$ to denote the space of $n \times m$ real matrices. The set of positive natural numbers is denoted by $\mathbb{N}$, and $\mathbb{N}_{1}^l$ denotes the set ${1, \ldots, l}$.
Given a variable $r$, we denote the sequence $r_k, \ldots, r_{k+N}$ by ${\bm r}_{\scriptscriptstyle k:k+N}$, and use $\bm r_k$ to represent this sequence after its first appearance, where $k$ is the discrete-time index, and $N$ is a positive integer.
For a matrix $P \in \mathbb{R}^{n \times n}$, $\lambda_{\rm max}(P)$ denotes its largest eigenvalue. For a vector $x \in \mathbb{R}^n$, we use $\|x\|_Q^2$ to denote $x^{\top} Q x$, and $\|x\|$ to denote its Euclidean norm.
Given two sets $\mathcal{Z}$ and $\mathcal{V}$, their Minkowski sum is defined as $\mathcal{Z} \oplus \mathcal{V} = \{z + v \mid z \in \mathcal{Z},\ v \in \mathcal{V} \}$.
For a given set of variables $z_i \in \mathbb{R}^{q_i}$, where $i = 1, 2, \ldots, M$, we define the concatenated vector $(z_1, z_2, \ldots, z_{\scriptscriptstyle M})$ in the compact form: $(z_{1}, z_{2}, \cdots, z_{\rm\scriptscriptstyle M})=[\,z_{1}^{\top}\ z_{2}^{\top}\ \cdots\ z_{\rm\scriptscriptstyle M}^{\top}\,]^{\top}\in{\mathbb{R}}^{q}$, where $q= \sum_{i=1}^{M}q_{i}$.
	\section{Control problem and preliminaries}\label{sec:cpf}
	This section first describes the time delayed systems and formulates the optimal control problem considered, and then reviews the preliminaries on the Koopman operators.
	\subsection{Control problem formulation}
	Consider a class of nonlinear, discrete-time, time delayed systems with additive disturbances described as
	\begin{equation}\label{Eqn:non-model}
		x^{+}=f(x,x_{-\tau},u)+w_o,
	\end{equation}
	with the initial condition on the time delayed state as
	\begin{equation}
		x_{-i}=\varphi_{-i},\, i\in\mathbb{N}_{-\tau}^0,
	\end{equation}
	where $x\in \mathcal{X}\subset \mathbb{R}^{n}$, $u\in \mathcal{U}\subset \mathbb{R}^m$ are the state and control variables, $x_{-\tau}$ is the time delayed state with a delay index $\tau$ which is assumed constant and known, $\mathcal{X}$ and $\mathcal{U}$ are convex \zxl{and compact} sets containing the origin in their interiors, $w_o\in\mathcal{W}_o$ is an additive, unknown but bounded noise, $\mathcal{W}_o$ is a convex and compact set containing the origin in the interior, $f$ is an unknown state transition function. It is assumed that $f(0,0,0)=0$,  $f(x,x_{-\tau},u)$ is $C^{\infty}$ on $\mathcal{X}^2\times \mathcal{U}$, $\|f(x,x_{-\tau},u)\|<+\infty$, 
	{\color{black} and the state $x$ is measured.} Note that while some systems may be directly influenced by multiple delayed states, we consider only a single delayed state $x_{-\tau}$ in the system for simplicity. This simplification does not lead to substantial differences in controller design and theoretical analysis.\\ 
	Given any initial state condition $x_{-i}\in\mathcal{X}$ for $i\in\mathbb{N}_{-\tau}^0$, the control objective is to minimize the performance index $J=\sum_{k=0}^{+\infty} \|x_k\|_{Q}^2+\|u_k\|_R^2,$
	where $Q=Q^{\top}\in\mathbb{R}^{n\times n}$ and $R=R^{\top}\in\mathbb{R}^{m\times m}$, $Q,R\succ 0$. 
    %
	\begin{definition}[Local controllability~\cite{4484215}]\label{def-control}
		The time delayed system $x^{+}=f(x,x_{-\tau},u)$ is controllable in the domain $\mathcal{X}^2\times \mathcal{U}$ if, for any $x_{-i}\in\mathcal{X}$, $i\in\mathbb{N}_0^{\tau}$, there exists a feedback control law $u(x_k,x_{k-\tau})\in\mathcal{U}$, $u(0,0)=0$, such that the state reaches the origin in finite time, i.e., there exists a finite time instant $l$ for which $x_l=0$. 
	\end{definition}
	    \begin{remark}\label{remark1} Designing optimization-based control for the time delayed system~\eqref{Eqn:non-model} under unknown dynamics is  nontrivial. In principle, if the system dynamics were known, one could construct an augmented model by incorporating the delayed states, transforming the problem into a delay-free form. 
        This would allow using standard nonlinear MPC techniques. {\color{black}However, the augmented state space grows to a dimension of $n\times(\tau+1)$, leading to a substantial increase on online computational complexity. To enable efficient online computation and handle unknown dynamics, we adopt a robust MPC framework based on a delay-aware Koopman modeling approach, as detailed in the following sections.}
    \end{remark}
	\vspace{-1mm}
	\subsection{Preliminary on Koopman operators}
    \vspace{-1mm}
	{\color{black}
		We first review the Koopman operator for a class of discrete-time nonlinear delay-free systems $x^{+}=f(x,u)$.
		Let $\vec{\bm u}=(u_{i})_{i=1}^{+\infty}$  be the collection of all the control inputs belonging to $\mathcal{U}^{\infty}$ and $\Gamma$ be a left shift operator such that {\color{black}$\vec{\bm u}(i+1)=\Gamma \vec{\bm u}(i)$, where $\vec{\bm u}(i)$ denotes the $i$-th element of $\vec{\bm u}$.} Define $\phi(x,\vec{\bm u})$ be a scalar-value observable function with arguments in $x,\,\vec{\bm u}$, and denote {\color{black}$\mathcal{F}\in \mathbb{C}$} as a given space in which the observables lie. The Koopman operator $\mathcal{K}:\mathcal{F}\rightarrow \mathcal{F}$ is defined as~\cite{arbabi2018data,korda2018convergence}
  {\color{black}\begin{equation}\label{Eqn:koopman}
			\mathcal{K} \phi(x,\vec{\bm u})=\phi (f(x,u), \Gamma\vec{\bm u}),
		\end{equation}
		for every $\phi(x,\vec{\bm u})\in\mathcal{F}$, where $\mathcal{F}$ is invariant under the action of the Koopman operator.} 
		%
		\section{Robust Koopman MPC for time delayed systems}\label{sec:design} 
		In this section, we first derive the Koopman model for time delayed systems. Then, we present a robust Koopman MPC (r-KMPC) approach to address the time delays in the Koopman model. Finally, the closed-loop robustness and point-wise convergence are proven.
		\subsection{Koopman model for time delayed systems}
		%
        To formulate the Koopman operator for time delayed systems, we first define the augmented state $\bm x_d=(x_i)_{i=-\tau}^0$ as an augmented state with time delayed states. As a result, the time delayed model~\eqref{Eqn:non-model} can be transformed into a higher dimensional standard dynamical model: $\bm x_d^+=F(\bm x_d,u_w)$ {\color{black} with $u_w=(u,w_o)$}. Let $\vec{\bm u}_w=(u_{w,i})_{i=1}^{+\infty}$, then the Koopman operator for~\eqref{Eqn:non-model} is given as
		{\color{black}\begin{equation}\label{Eqn:koopman-delay}
			\mathcal{K} \phi_d(\bm x_d,\vec{\bm u}_w)=\phi_d(F(\bm x_d, u_w), \Gamma\vec{\bm u}_w), 
		\end{equation}
		for every $\phi_d(\bm x_d,\vec{\bm u}_w)\in\mathcal{F}_d$, where $\mathcal{F}_d$ is the invariant observable space of $\phi_d(\bm x_d,\vec{\bm u}_w)$.} 
     \begin{remark}  In principle, the EDMD method~\cite{korda2018linear} for delay-free systems can be used for the finite-dimensional approximation of $\mathcal{K}$ for model-based controller design. This is achieved by selecting a computable observable function of the form $\Phi_d(\bm{x}_d, u_w) = \left( \Psi(\bm{x}_d), u, w_o \right)$. {\color{black}However, as noted in Remark~\ref{remark1}, this choice could increase the dimensionality of the Koopman state space, as $\bm{x}_d$ spans $\tau+1$ time steps. To reduce the complexity of the lifted space, we define a more practical observable function using only the current and $\tau$-step delayed states, $x$ and $x_{-\tau}$, which directly affects the state evolution.} A vector of observable functions is given as
		\begin{equation}\label{Eqn:lift-extend-time delay}
			\Phi_d(x,x_{-\tau},u_w)=\left(
			\Psi(x),\Psi(x_{-\tau}),
			u,w_o\right)\in\mathbb{R}^{N_{\Phi}},
		\end{equation}
		where $N_{\Phi}=2n_{\psi}+m+n$, $\Psi(\cdot)$ is defined as \begin{equation}\label{eqn:psi_x}
			\Psi\left(x\right) :=( {\psi_{1}\left(x\right)},{\cdots}, {\psi_{n_{\psi}}\left(x\right)}),
		\end{equation}
		and $\psi_i$, $i\in\mathbb{N}_{1}^{n_{\psi}}$, can be chosen as some basis functions or neural networks~\cite{lian2021koopman}.
        \end{remark}
 Let $\mathcal{K}_{\scriptscriptstyle N_{\Phi}}\in\mathbb{R}^{N_{\Phi}\times N_{\Phi}}$ be a finite-dimensional approximation of $\mathcal{K}$. The goal is to derive $\mathcal{K}_{\scriptscriptstyle N_{\Phi}}$ via minimizing $
\|\Phi_d(z^+)-\mathcal{K}_{\scriptscriptstyle N_{\Phi}}\Phi_d(z)\|^2,$ where $z=(x,x_{-\tau},u,\hat w_o)$, $\hat w_{o}\in\hat{\mathcal{W}}_o$ is an estimation of the unknown disturbance $w_{o}$, wherein $\hat w_{o}$ can be chosen such that $\hat{\mathcal{W}}_o$ is compact. 
  {\color{black} To guarantee the approximation accuracy of the Koopman operator, it is assumed that the data points $\{(x_i^+,x_i,x_{i-\tau},u_i,\hat w_{o,i})\}_{i=1}^M$ are drawn independently according to a given probability distribution $\mu$. Alternatively, this condition can be replaced by assuming that the generated data points $\{(x_i^+,x_i,x_{i-\tau},u_i,\hat w_{o,i})\}_{i=1}^M$ are ergodic in $\mathcal{X}^2\times\mathcal{U}$ with respect to the distribution $\mu$~\cite{korda2018convergence}.}
  
  Let the first $n_{\psi}$ rows of $\mathcal{K}_{\scriptscriptstyle N_{\Phi}}$ be $[\mathcal{K}_{\scriptscriptstyle N_{\Phi}}]_{\scriptscriptstyle 1:n_{\psi}}=[A\, A_d\, B\, D]\in\mathbb{R}^{n_{\psi}\times N_{\Phi}}$, where $A,$ $A_d\in\mathbb{R}^{n_{\psi}\times n_{\psi}}$, $B\in\mathbb{R}^{n_{\psi}\times m}$, and $D\in\mathbb{R}^{n_{\psi}\times n}$ are derived directly from data.
		Since we only care about the prediction $\Psi(x^+)$ with $\Phi_d(z)$ for controller design, the following least square problem is stated~\cite{korda2018linear}: 
\begin{subequations}\label{Eqn:appro_K-delay}
			\begin{align}
				\hspace{-1mm}	\min{
						\scriptstyle [\mathcal{K}_{\scriptscriptstyle N_{\Phi}}]_{\scriptscriptstyle 1:n_{\psi}}} 
            \hspace{-1mm}\sum_{i=1}^{M}&\|
                \Psi(x_i^+)-[\mathcal{K}_{\scriptscriptstyle N_{\Phi}}]_{\scriptscriptstyle 1:n_{\psi}}\Phi_d(z)\|^2,	
			\end{align}
    The state $x$ is recovered from $\Psi(x)$ through a matrix $C\in\mathbb{R}^{n\times n_{\psi}}$ via the following least square problem:
            \begin{align}
            \min{C}\sum_{i=1}^{M}&\left\|C\Psi(x_i)-x_i\right\|^2.
            \end{align}
\end{subequations}
Note that, the practical observable function~\eqref{Eqn:lift-extend-time delay} and the data-driven problem~\eqref{Eqn:appro_K-delay} inevitably introduce modeling uncertainties~\cite{zhang2022robust}. Letting $s=\Psi(x)$ and $s_{-\tau}=\Psi(x_{-\tau})$, 
an equivalent Koopman model of~\eqref{Eqn:non-model} with time delays and uncertainties can be written as
		\begin{equation}\label{Eqn:linear_p-residual}
			\left\{\hspace{-1mm}\begin{array}{l}
				{s}^+=A{s}+A_{d}s_{-\tau}+Bu+\bar w\\
				[0.2cm] x=C{s}+v(s),
			\end{array}\right. \qquad
		\end{equation}
		where $\bar w=D\hat w_o+w(s,s_{-\tau},u,w_o,\hat w_o)\in\bar {\mathcal{W}}$,  $v(s)\in \mathcal{V}$ and $w(s,s_{-\tau},u,w_o,\hat w_o)\in\mathcal{W}$ are the modeling errors, wherein $\mathcal{W}$ and $\mathcal{V}$ are convex sets containing the origin; $\bar {\mathcal{W}}=D \hat{\mathcal{W}}_o\oplus \mathcal{W}$, $\hat{\mathcal{W}}_o$ is a computable convex set where $\hat w_{o}$ lies in. It is assumed that $\hat{\mathcal{W}}_o$ is bounded.} 
        The boundedness of $\bar{\mathcal{W}}$ and $\mathcal{V}$ will be formally established in Lemma~\ref{lemma:sets}.
        \vspace{-1mm}
	{\color{black}	\begin{assumption}[Lifted function~\cite{zhang2022robust}]\label{assum:invers-phi}\hfill
	\begin{enumerate}[(i)]
    \item The lifted function $\Psi(x)$ is Lipschitz continuous;
    \item $\{\psi_i(x)\}_{i=1}^{n_{\psi}}$ are linearly independent;
    \item\label{ASSUMP:3} ${\rm{rank}}(\triangledown_x \Psi(x))=n,$ $\forall x\in\mathcal{X}$.
    \end{enumerate}
		\end{assumption}}
        \vspace{-1mm}
		Let the set $\mathcal{S}_{\Psi}=\{s\in\mathbb{R}^{n_{\psi}}|s=\Psi(x), x\in\mathcal{X}\}$. {\color{black}As shown in~\cite{zhang2022robust}, under Assumption~\ref{assum:invers-phi}.(\ref{ASSUMP:3}), there  exists a locally Lipschitz function $\Psi^{-1}:\mathcal{S}_{\Psi}\rightarrow\mathcal{X}$ such that $\Psi^{-1}(\Psi(x))=x$, $\forall\, x\in\mathcal{X}$. In view of this property, the lifting function $\mathcal{X}$ can then be chosen from a broad class of commonly used observables (e.g., polynomials, radial basis functions), without requiring restrictive constructions such as bounded mappings over unbounded domains.}

		\begin{lemma}[Sets and equilibrium~\cite{zhang2022robust}]\label{lemma:sets}\hfill
			\begin{enumerate}[(i)]
				\item 	If $\Psi(x)$, $x\in\mathcal{X}$, is such that $\mathcal{S}_{\Psi}$ is bounded, then $\bar{\mathcal{W}}$ and $\mathcal{V}$ are bounded.
				\item Let $\bar A=\begin{bmatrix}
					I-(A+A_d)^{\top}&C^{\top}
				\end{bmatrix}^{\top}$ and $(u,s,s_{-\tau},x)=(0,s_r,s_r,0)$ be the equilibrium of model~\eqref{Eqn:linear_p-residual} under $w_o=0$. It holds that $\left(w(s_r,s_r, 0,0,0),v(s_r)\right)=0$ iff
				\begin{equation}\label{Eqn:equili-uncer=0}
					s_r\subseteq {\rm{Ker}}\,\bar A \quad \text{or}\quad s_r=0; 
				\end{equation}
				moreover, if the pair $(A+A_d,C)$ is observable, condition~\eqref{Eqn:equili-uncer=0} is reduced to $s_r=0.$ 
					
			\end{enumerate}
		\end{lemma}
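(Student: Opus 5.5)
For part (i), I will write the modeling errors explicitly from the model \eqref{Eqn:linear_p-residual} and bound each term on compact sets. From the second equation, $v(s)=x-Cs=\Psi^{-1}(s)-Cs$ for $s\in\mathcal{S}_{\Psi}$. Since $\mathcal{X}$ is compact, $\Psi^{-1}$ maps into $\mathcal{X}$ and is therefore bounded. Because $\mathcal{S}_{\Psi}$ is bounded, $\|Cs\|\le\|C\|\sup_{s\in\mathcal{S}_\Psi}\|s\|$, and hence $\mathcal{V}$ is bounded. For the first equation,
\begin{equation*}
w=\Psi\big(f(x,x_{-\tau},u)+w_o\big)-As-A_ds_{-\tau}-Bu-D\hat w_o ,
\end{equation*}
with $x=\Psi^{-1}(s)$ and $x_{-\tau}=\Psi^{-1}(s_{-\tau})$. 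The argument of $\Psi$ stays in $\mathcal{X}$, or at least in a bounded set, because $f$ is bounded and $\mathcal{W}_o$ is compact. Since $\Psi$ is Lipschitz by Assumption~\ref{assum:invers-phi}(i), its image of a bounded set is bounded. The remaining linear terms are bounded because $\mathcal{S}_\Psi$, $\mathcal{U}$ and $\hat{\mathcal{W}}_o$ are bounded. Thus $\mathcal{W}$ is bounded, and $\bar{\mathcal{W}}=D\hat{\mathcal{W}}_o\oplus\mathcal{W}$ is a Minkowski sum of bounded sets, so it is bounded.

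For part (ii), I will evaluate the errors at the equilibrium $(u,s,s_{-\tau},x)=(0,s_r,s_r,0)$ with $w_o=0$, taking $\hat w_o=0$. The first line of \eqref{Eqn:linear_p-residual}, with $s^+=s=s_{-\tau}=s_r$, gives $w(s_r,s_r,0,0,0)=(I-A-A_d)s_r$. The second line with $x=0$ gives $v(s_r)=-Cs_r$. Hence $(w,v)=0$ iff $\bar A s_r=0$, that is, $s_r\in{\rm Ker}\,\bar A$. The case $s_r=0$ is contained in this and is listed separately only for emphasis. Conversely, if $s_r\in{\rm Ker}\,\bar A$, both residuals vanish by the same identities.

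For the final claim, suppose $(A+A_d,C)$ is observable and $s_r\in{\rm Ker}\,\bar A$. Then $(A+A_d)s_r=s_r$ and $Cs_r=0$. Iterating, $C(A+A_d)^k s_r=Cs_r=0$ for all $k$, so $s_r$ lies in the unobservable subspace. Equivalently, $s_r$ is an eigenvector of $A+A_d$ with eigenvalue $1$ in ${\rm Ker}\,C$, which the PBH test forbids. Therefore $s_r=0$.

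The main obstacle is in part (i): justifying that the successor $f(x,x_{-\tau},u)+w_o$ lies in a set on which $\Psi$ is controlled, and that $\Psi^{-1}$ is well defined there. I would handle this by using only that $\Psi$ is globally Lipschitz on a bounded superset containing $\mathcal{X}\oplus\mathcal{W}_o$ and the image of $f$, which follows from Assumption~\ref{assum:invers-phi}(i). This avoids any need to invert $\Psi$ outside $\mathcal{S}_\Psi$.
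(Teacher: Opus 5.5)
Your proposal is correct and follows essentially the same route as the paper. In (i) you write $w$ explicitly through $\Psi^{-1}$ and bound it on the compact sets; you bound each term directly where the paper invokes the Lipschitz-type estimate \eqref{Eqn:lipsch-1}, and your treatment of $\mathcal{V}$ via $v=\Psi^{-1}(s)-Cs$ and of the successor state leaving $\mathcal{X}$ is, if anything, more careful. In (ii) you reduce the condition to $(I-A-A_d)s_r=0$, $Cs_r=0$ and iterate to $C(A+A_d)^k s_r=0$, exactly as the paper does.
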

		\textbf{Proof.} Claim (i): The proof follows from~\cite{zhang2022robust} but with several changes due to the time delays.  First, in view of $s=\Psi(x)$, one has  $w=\Psi(f(\Psi^{-1}(s),\Psi^{-1}(s_{-\tau}),u)+w_o)-As-A_ds_{-\tau}-Bu-D\hat w_o$. Letting $\delta w=w_o-\hat w_o$, for all $s,\,s_{-\tau}\in\mathcal{S}_{\Psi}$, %
		$u\in\mathcal{U}$, $\hat w_{o}\in\hat{\mathcal{W}}_o$, and $w_{o}\in\mathcal{W}_o$, it is convenient to write the following inequality:
		\vspace{-1mm}\begin{equation}\label{Eqn:lipsch-1}
			\begin{array}{lll}
				\|w\|\leq\\
				\leq L_s \|s\|+L_{s,d} \|s_{-\tau}\|+
				L_u \|u\|+L_{\delta w} \|\delta w_{o}\|+L_{\hat w}\|\hat w_{o}\| \\
				<+\infty,\vspace{-1mm}
			\end{array}
		\end{equation}
		where $L_s$, $L_{s,d}$, $L_u$, $L_{\delta w}$, and $L_{\hat w}$ are bounded Lipschitz constants. {\color{black}Hence,  $\bar{\mathcal{W}}$ is bounded since $\mathcal{X}$, $\mathcal{U}$, $\hat{\mathcal{W}}_o$ and $\mathcal{W}_o$ are compact.} Analogously,  $\mathcal{V}=\mathcal{X}\ominus C\mathcal{S}_{\Psi}$ is bounded. \\
		Claim (ii): Condition~\eqref{Eqn:equili-uncer=0} follows straightly from linear algebra. Also, provided that $(A+A_d,C)$ is observable,~\eqref{Eqn:equili-uncer=0} is equivalent to
		\begin{equation}\label{Eqn:linear-obser}
			\begin{array}{l}
				Cs_r=C(A+A_d)s_r=\cdots=
				C(A+A_d)^{n_{\psi}-1}s_r=0,
			\end{array}
		\end{equation}
		since $s_r=(A+A_d)s_r=(A+A_d)^2s_r\cdots$ from  $s_r=(A+A_d)s_r$,
		leading to the unique solution $s_r=0$.
		\hfill $\blacksquare$\\
		The following basic assumption is given in line with~\cite{zhang2022robust}.
		\begin{assumption}[\cite{zhang2022robust}]\label{assu:phi-ori}
			The lifted function $\Psi(x)$ is constructed such that  $\Psi(0)= 0$.
		\end{assumption}
        \begin{remark}
Under Assumption~\ref{assu:phi-ori}, the equilibrium of system~\eqref{Eqn:linear_p-residual} with 
$w_o=0$ is located at the origin, as shown in Lemma~\ref{lemma:sets}. Moreover, consistent with~\cite{zhang2022robust}, it follows that model~\eqref{Eqn:linear_p-residual} is controllable on the set $\mathcal{S}_{\Psi}^2\times \mathcal{U}$ under 
$w_o=0$ if and only if the original system~\eqref{Eqn:non-model} is controllable on the set $\mathcal{X}^2\times \mathcal{U}$. This implies the existence of a control law that drives model~\eqref{Eqn:linear_p-residual} to the origin when 
$w_o=0$, which is essential for ensuring closed-loop stability.
Furthermore, Lemma~\ref{lemma:sets} establishes that if the lifted state of model~\eqref{Eqn:linear_p-residual}
$s_k\rightarrow 0$ as $k\rightarrow +\infty$, then the state of the original model~\eqref{Eqn:non-model} satisfies 
$x_k\rightarrow 0$ as $k\rightarrow +\infty$. This connection reinforces the validity of the stability analysis with the lifted Koopman model.
        \end{remark}
        The following lemma is introduced for the stability analysis of time delayed systems.
		{\color{black}\begin{lemma}[Lemma 6.1 in~\cite{Fridman2014TimeDelay}]
			System \eqref{Eqn:linear_p-residual} with $w_o=0$ under a feedback contol law $u=h(s,s_{-\tau})$ is asymptotically stable if there exist positive numbers $\alpha$, $\beta$, and a Lyapunov-Krasovskii functional $V_{\rm LK}:\mathbb{R}^{n_{\psi}(\tau+1)}\rightarrow \mathbb{R}^{+}$ such that
			\begin{subequations}\label{EQN:LY-K}
				\begin{align}
					&0\leq V_{\scriptscriptstyle{\rm LK}}(s_k)\leq \beta\left\{\max_{j\in [-\tau,0]}\|s_{k+j}\|^2\right\}\label{eqn:LK-1}\\
					&V_{\scriptscriptstyle{\rm LK}}(s_{k+1})-V_{\scriptscriptstyle{\rm LK}}(s_k)\leq -\alpha \|s_k\|^2.\label{eqn:LK-2}
				\end{align}
			\end{subequations}
		\end{lemma}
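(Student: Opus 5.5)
The plan is to prove the Lyapunov--Krasovskii result directly in the discrete-time setting, by a telescoping argument on \eqref{eqn:LK-2}. Write the closed-loop system \eqref{Eqn:linear_p-residual} with $w_o=0$ and $u=h(s,s_{-\tau})$ as a delay-free system in the augmented state $\bm s_k=(s_{k-\tau},\ldots,s_k)\in\mathbb{R}^{n_\psi(\tau+1)}$. Then $V_{\rm LK}(s_k)$ is understood as $V_{\rm LK}(\bm s_k)$, and $\max_{j\in[-\tau,0]}\|s_{k+j}\|^2$ is $\|\bm s_k\|_\infty^2$ measured blockwise. Asymptotic stability is shown in two parts: Lyapunov stability of the origin, and attractivity.

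\emph{Attractivity.} Sum \eqref{eqn:LK-2} from $k=0$ to $K-1$ and use $V_{\rm LK}\ge 0$. This gives
\[
\alpha\sum_{k=0}^{K-1}\|s_k\|^2\le V_{\rm LK}(\bm s_0)-V_{\rm LK}(\bm s_K)\le V_{\rm LK}(\bm s_0)\le \beta \max_{j\in[-\tau,0]}\|s_j\|^2 .
\]
The series $\sum_k\|s_k\|^2$ therefore converges, so $s_k\to 0$ as $k\to\infty$. Since each block of $\bm s_k$ is some $s_{k+j}$, it follows that $\bm s_k\to 0$.

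\emph{Stability.} From the same telescoping bound, for every $k\ge 0$,
\[
\|s_k\|^2\le \frac{\beta}{\alpha}\max_{j\in[-\tau,0]}\|s_j\|^2 .
\]
For the delayed components with index in $[-\tau,-1]$, the trivial bound $\|s_j\|\le \|\bm s_0\|_\infty$ holds. Hence
\[
\sup_{k\ge -\tau}\|s_k\|\le \max\{1,\sqrt{\beta/\alpha}\}\,\|\bm s_0\|_\infty .
\]
Given $\epsilon>0$, choose $\delta=\epsilon/\max\{1,\sqrt{\beta/\alpha}\}$. Any initial function with $\|\bm s_0\|_\infty<\delta$ then produces a trajectory that stays in the $\epsilon$-ball. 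Together with attractivity, this gives asymptotic stability; the bound is uniform in the initial time, so it is in fact uniform asymptotic stability.

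\emph{Expected obstacle.} The step needing the most care is the use of only the upper bound \eqref{eqn:LK-1}. No lower bound of the form $V_{\rm LK}\ge \gamma\|s_k\|^2$ is assumed, so the standard Lyapunov sandwich argument is unavailable. The decrease condition \eqref{eqn:LK-2} has to supply the positivity instead, and this is exactly what the summation argument does. If a class-$\mathcal{K}$ formulation is preferred, the same estimates can be restated with $\sqrt{\beta/\alpha}$ as the gain. One should also check that the closed loop is well defined, i.e.\ that $h$ keeps the state in the domain where the inequalities hold. The lemma implicitly assumes this, so I would state that the inequalities are required along all trajectories in the considered domain.
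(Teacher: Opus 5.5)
The paper gives no proof of this lemma; it is imported as Lemma~6.1 of Fridman's book. Your argument is correct and is essentially the standard proof of that discrete-time Lyapunov--Krasovskii result. Telescoping \eqref{eqn:LK-2} and using only $V_{\rm LK}\ge 0$ and the upper bound \eqref{eqn:LK-1} gives two things at once: $\sum_{k\ge 0}\|s_k\|^2<\infty$ (attractivity) and $\|s_k\|^2\le(\beta/\alpha)\max_{j\in[-\tau,0]}\|s_j\|^2$ (stability). So, as you observe, no lower bound on $V_{\rm LK}$ is needed.

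The one loosely justified point is your aside on \emph{uniform} asymptotic stability, which also needs the convergence time to be uniform over the initial $\delta$-ball. This follows from the same sum bound. At most $\beta\delta^2/(\alpha\eta^2)$ indices satisfy $\|s_k\|\ge\eta$, so a run of $\tau+1$ consecutive indices with $\|s_k\|<\eta$ occurs before a time depending only on $(\delta,\eta)$, and from then on your stability estimate applies. The lemma itself only asserts asymptotic stability, so this does not affect your proof.
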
}
		
	\subsection{Robust Koopman MPC for time delayed systems}\label{sec:r-KMPC}
	Define the nominal time delayed predictor of~\eqref{Eqn:linear_p-residual} as
	\begin{equation}\label{Eqn:unpert}
		\left\{\hspace{-1mm}\begin{array}{l}
			\hat{s}^+=A\hat{s}+A_{d}\hat s_{-\tau}+B\hat u\\
			[0.2cm] \hat x=C\hat {s},
		\end{array}\right. \qquad
	\end{equation}
	where $\hat u$, $\hat s$, and $\hat s_{-\tau}$ are decision variables computed with a standard MPC (deferred in~\eqref{Eqn:optimiz1}). 
	\begin{definition}[Observability]\label{def-observa}
		Model~\eqref{Eqn:unpert} is observable if there exists a finite time instant such that the initial state $s_0$ can be uniquely determined from $u_k,\,\hat x_k,\{\hat x_{k-j}\}_{j=1}^{\tau}$. 
	\end{definition}
	Denote a matrix $G_k$ for $k\in\mathbb{N}$ as
	\begin{equation}
		G_k=\left\{\begin{array}{ll}
			A^k,\, &k=0,\cdots,\tau\\
			G_{k-1}A+G_{k-\tau-1}A_d,\, &k=\tau+1,\cdots.
		\end{array}\right.
	\end{equation}
	Define a matrix $L_k$ for $k\in\mathbb{N}$ as
	\begin{equation}
		L_k=\left\{\begin{array}{ll}
			A^k,\, &k=0,\cdots,\tau\\
			\sum_{j=1}^aA^{b+1-j}A_d^{j-1}L_{k-j}, \, &k=\tau+1,\cdots,
		\end{array}\right.
	\end{equation}
	where $a=\lfloor k/\tau\rfloor$, $b=k\bmod \tau$.
	The following proposition regarding controllability and observability is stated.
	\begin{proposition}[Controllability and observability]\label{pro:sta-obser}\hfil
		\begin{enumerate}[(i)]
			\item\label{prop:contr} Model~\eqref{Eqn:unpert} is controllable iff there exists $l\in\mathbb{N}$ such that \begin{equation}\label{Eqn:con-condition}
				\text{rank}\{G_0,\,G_1B,\,\cdots,\,G_lB\}=n_{\psi};
			\end{equation}
			\item Model~\eqref{Eqn:unpert} is observable iff there exists $l\in\mathbb{N}$ such that
			\begin{equation}\label{Eqn:obs-condition}
				\text{rank}\{L_0,\,CL_1,\,\cdots,\,CL_l\}=n_{\psi}.
			\end{equation}
		\end{enumerate}
	\end{proposition}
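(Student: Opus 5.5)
The plan is to solve the delay recursion explicitly and read controllability and observability off the resulting linear maps. The key object is a fundamental matrix for the free delayed recursion.

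\textbf{Step 1: fundamental matrix.} Take the convention that the initial function consists of $\hat s_0$ with zero history ($\hat s_{-j}=0$ for $j\ge 1$), or treat the history separately through the $A_d$ terms. Iterate $\hat s_{k+1}=A\hat s_k+A_d\hat s_{k-\tau}+B\hat u_k$. For $k\le\tau$ the delayed term refers only to the initial segment, so the homogeneous response is $A^k\hat s_0$. This gives $G_k=A^k$ for $k=0,\dots,\tau$.

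For $k>\tau$, substituting the recursion shows that the response matrix obeys $G_k=G_{k-1}A+G_{k-\tau-1}A_d$. I will verify this by induction, using the commuting identity between the left-multiplied and right-multiplied forms of the recursion. That identity is the standard one for fundamental matrices of delay difference equations. Variation of constants then yields
\begin{equation*}
\hat s_{l+1}=G_{l+1}\hat s_0+\sum_{j=0}^{l}G_{l-j}B\hat u_j .
\end{equation*}

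\textbf{Step 2: controllability.} In the linear, disturbance-free setting, the definition's finite-time reachability of the origin from arbitrary initial data is equivalent to the reachable subspace being all of $\mathbb{R}^{n_\psi}$. By the formula in Step 1, the reachable subspace after $l+1$ steps is $\mathrm{span}\{G_0B,\dots,G_lB\}$ together with the free-response directions.

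\emph{Sufficiency.} Assume the rank condition \eqref{Eqn:con-condition} holds. Then for any free-response term $G_{l+1}\hat s_0$ (and any history contribution), one can choose $\hat u_0,\dots,\hat u_l$ to cancel it, so the state reaches zero. The extra $G_0$ block in the rank condition accounts for the case $l=0$ and for the identity term.

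\emph{Necessity.} If the rank is deficient for every $l$, take a nonzero vector $\eta$ orthogonal to all $G_jB$. Choose initial data whose free response is not annihilated along $\eta$; then $\eta^\top\hat s_k\ne0$ for all inputs. This argument needs care with the $G_0=I$ term, which makes the stated condition trivially full rank. I would therefore interpret the condition as a rank test with the free-response term included and check consistency with the paper's convention.

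\textbf{Step 3: observability, by duality.} Write the output $\hat x_k=C\hat s_k$. Subtracting the known input contributions, the output becomes a linear function of $\hat s_0$ through the free-response matrices. The paper's $L_k$ are generated by expanding the delayed recursion block-wise in chunks of length $\tau$. Writing $k=a\tau+b$, each block contributes $A^{b+1-j}A_d^{j-1}L_{k-j}$, obtained by unrolling $j-1$ delay hops and $b+1-j$ undelayed steps. I would prove by induction on $a$ that $L_k$ equals the free-response map, consistent with $G_k$.

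Once that is established, $\hat s_0$ is uniquely determined from the stacked measurements iff the stacked matrix $\{L_0,CL_1,\dots,CL_l\}$ has trivial kernel, i.e. rank $n_\psi$. The delayed measurements $\{\hat x_{k-j}\}$ supply exactly the rows in the initial window.

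\textbf{Main obstacle.} The hardest part is the combinatorial verification that the $L_k$ recursion with $a=\lfloor k/\tau\rfloor$ and $b=k\bmod\tau$ really reproduces the free response. I would first check it for small $k$ (for example $k=\tau+1$ and $k=2\tau$) against a direct expansion. Only then would I attempt a general induction that groups the paths by the number of delay hops.
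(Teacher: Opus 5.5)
Your Step~1 is correct, and it already shows why Step~3 cannot be completed. The coefficient of $\hat s_0$ in $\hat s_k$ is the fundamental matrix, i.e.\ $G_k$ itself, and the paper's $L_k$ is a different object.

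\begin{itemize}
\item For $\tau\ge 2$ and $k=\tau+1$ the recursion has $a=1$, $b=1$, so $L_{\tau+1}=AL_\tau=A^{\tau+1}$. But $\hat s_{\tau+1}=A\hat s_\tau+A_d\hat s_0+B\hat u_\tau$ gives the coefficient $G_{\tau+1}=A^{\tau+1}+A_d$.
\item For $k=2\tau$ (or $k=2$ when $\tau=1$) one has $b=0$, and the $j=2$ term contains $A^{b+1-j}=A^{-1}$. So $L_k$ is not even defined for singular $A$.
\end{itemize}

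The induction ``$L_k$ equals the free-response map'' is therefore false, and the small-$k$ check you propose would refute it. What can be proved is a rank test on $C, CG_1,\dots,CG_l$, and only under a zero (or known) history. Definition~\ref{def-observa} provides the outputs $\hat x_{-j}=C\hat s_{-j}$, not the states $\hat s_{-j}$. After subtracting the input terms, the outputs still contain $\chi_l(\hat s_{-\tau},\dots,\hat s_{-1})$. Uniqueness of $\hat s_0$ is thus a question about the joint unknown $(\hat s_0,\hat s_{-1},\dots,\hat s_{-\tau})$, not about the $\hat s_0$-response alone. The delayed outputs carry information about the $\hat s_{-j}$, which are initial data independent of $\hat s_0$; they are not extra rows for $\hat s_0$.

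In (i) you rightly notice that $G_0=I$ makes \eqref{Eqn:con-condition} hold with $l=0$ for every model, and $L_0=I$ does the same to \eqref{Eqn:obs-condition}. So both equivalences as literally stated are false: $A=I$, $A_d=0$, $B=0$ is not controllable, and $C=0$ is not observable. ``Interpreting'' the condition is not enough. Your sufficiency argument silently uses $\mathrm{span}\{G_0B,\dots,G_lB\}=\mathbb{R}^{n_\psi}$, so you must say explicitly that you are proving the corrected condition with $G_0B$.

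Even then necessity fails. Your claim that reaching the origin from arbitrary initial data is equivalent to the reachable subspace being all of $\mathbb{R}^{n_\psi}$ is false in discrete time. With $A=A_d=0$ and $B=0$, every initial condition gives $\hat s_1=0$, so the model is controllable in the sense of Definition~\ref{def-control}, yet every $G_jB$ vanishes. Correspondingly, initial data whose free response is not annihilated along $\eta$ at every step need not exist. The correct characterization of reaching the origin by open-loop inputs is an image inclusion: the free response at time $l+1$ of all initial data $(\hat s_0,\hat s_{-1},\dots,\hat s_{-\tau})$ must lie in $\mathrm{Im}\,[G_0B\ \cdots\ G_lB]$. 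For this the rank condition is only sufficient; it becomes necessary only if controllability is redefined as reachability.

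The paper's proof is the same reading-off of the solution formula $\hat s_l=\chi_l+L_l\hat s_0+\sum_i G_{l-i}B\hat u_i$ and passes over exactly these points, so your outline matches it. Neither can be turned into a proof of the proposition as stated without three changes: replace $G_0$ by $G_0B$ and $\{L_0,CL_1,\dots,CL_l\}$ by $\{C,CG_1,\dots,CG_l\}$; fix the history convention in (ii); and weaken the ``iff'' in (i).
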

	\textbf{Proof.} Claim (i): The forward evolution of $\hat x$ and $\hat s$ with~\eqref{Eqn:unpert} at a generic time instant $l$ can be written as 
	\begin{equation}
		\left\{\begin{array}{ll}
			\hat s_l&=\chi_l(\hat s_{-\tau},\cdots,\hat s_{-1})+L_l\hat s_0+\sum_{i=0}^{l}G_{l-i}B\hat u_i\\
			\hat x_l&=C\hat s_l.
		\end{array}
		\right.
	\end{equation}
	In view of the definition of controllability  in Definition~\ref{def-control}, one can conclude that model~\eqref{Eqn:unpert} is controllable iff there exist $l\in\mathbb{N}$ and controls $\hat u_0,\,\cdots,\hat u_l$ such that $\hat s_l=0$ from finite initial states $\{\hat s_{-j}\}_{j=0}^{\tau}$, which leads to condition~\eqref{Eqn:con-condition}. \\ Claim (ii): Analogously, in view of Definition~\ref{def-observa}, a sufficient and necessary condition to uniquely determine $\hat s_0$ with $\hat u,\{\hat x_{-j}\}_{j=0}^{\tau}$ is $\text{rank}\{L_0,\,CL_1,\,\cdots,\,CL_l\}=n_{\psi}$, which is condition~\eqref{Eqn:obs-condition}.
	\hfill $\blacksquare$\\ 
	Let $e={s}-\hat {s}$ and $e_{\scriptscriptstyle {-\tau}}={s_{-\tau}}-\hat {s}_{-\tau}$.	The control action of r-KMPC for time delayed systems is given as
	\begin{equation}\label{Eqn:real_u}
		u=\hat u+Ke+K_de_{\scriptscriptstyle {-\tau}},
	\end{equation}
	{\color{black}where $K\in\mathbb{R}^{m\times n_{\psi}}$ and $K_d\in\mathbb{R}^{m\times n_{\psi}}$ are gain matrices such that $\bar F=F+F_d$ is Schur stable, wherein $F=A+BK$, $F_d=A_d+BK_d$. 
    Note that $\bar F=F+F_d$ being Schur stable may not be guaranteed by Proposition~\ref{pro:sta-obser}.(\ref{prop:contr}). Hence, we additionally require the following assumption to hold.
	\begin{assumption}[Stabilizability]\label{assum:stabi-obser}
		The pair $(A+A_d,B)$ is stabilizable.
	\end{assumption}}
{\color{black}Note that, Assumption~\ref{assum:stabi-obser} imposes an additional condition beyond the controllability property stated in Proposition~\ref{pro:sta-obser} (see also Definition~\ref{def-control}). This additional condition is beneficial for the construction of an RPI set for the error state $e$, as detailed below in~\eqref{Eqn:e}.} \\
    From~\eqref{Eqn:linear_p-residual} and~\eqref{Eqn:real_u}, the error $e$ evolves according to the following unforced time delayed system:
	\begin{equation}\label{Eqn:e}
		\left\{\begin{array}{l}
			e^+=Fe+F_de_{\scriptscriptstyle -\tau}+\bar w\\
			[0.2cm] e_x=C e+v,
		\end{array}\right. \qquad
	\end{equation}
	where $e_x=x-\hat x$.	Let $\mathcal{Z}_s$ be an RPI set of $e_s$ such that for any $e,\,e_{-\tau}\in\mathcal{Z}_s$, one has $\mathcal{Z}_s\subseteq \bar F\mathcal{Z}_s\oplus \bar{\mathcal{W}}$. {\color{black} Under this condition, a minimal RPI set (if possible) of $\mathcal{Z}_s$ can be constructed as $\mathcal{Z}_s=\bigoplus_{i=0}^{\infty}\left(F+F_d\right)^i \mathcal{W}$, which is nonempty, convex, and compact.} Consequently, we have $e_x\in C\mathcal{Z}_s\oplus \mathcal{V}:=\mathcal{Z}_x$.\\
	Now we are ready to state the nominal MPC problem to compute $\hat u$, $\hat s$, and $\hat s_{-\tau}$ in~\eqref{Eqn:real_u}.
	At any time instant $k$, the following quadratic optimization problem is to be solved:
	\begin{equation}\label{Eqn:optimiz1}
		\min{\{\hat s_{k-j}\}_{j=0}^{\tau},{\bm {\hat u}}_{k}}  {V\big(\hat{s}_k,\bm {\hat u}_{\scriptscriptstyle k}\big)}
	\end{equation}
	where
	\begin{equation}\label{Eqn:V_b}
		\begin{array}{ll}
			V\big(\hat{s}_k,\bm {\hat u}_{\scriptscriptstyle k}\big)=
			\sum_{i=0}^{N-1}(\|\hat {x}_{k+i}\|_{ Q}^2+\|\hat u_{k+i}\|_{R}^2)
			+V_f(\hat{s}_{k+N}),
		\end{array}
	\end{equation}
	and $\{\hat s_{k-j}\}_{j=0}^{\tau}$, and ${\bm {\hat u}}_{\scriptscriptstyle k}$ are the decision variables, $N$ is the prediction horizon, the terminal cost $V_f(\hat{s})$ is defined as $V_f(\hat{s})=\|\hat{s}\|^2_P+\sum_{i=1}^{\tau}\|\hat{s}_{-i}\|_{S}^2,$
	where the symmetric positive-definite matrices $P$ and $S$ satisfy the following inequality:
	\begin{equation}\label{Eqn:LYA}
		\Pi(P,S)=\begin{bmatrix}\Pi_1(P,S)&\star\\
			F_d^{\top}PF+K_d^{\top}RK&\Pi_2(P,S)
		\end{bmatrix}\preceq 0,
	\end{equation}
	where $\Pi_1(P,S)=F^{\top}PF+S-P+\bar Q+K^{\top}RK$, $\Pi_2(P,S)=F_d^{\top}PF_d-S+K_d^{\top}RK_d$,  $\bar Q=C^{\top}QC$.
	
	The optimization problem~\eqref{Eqn:optimiz1} is performed subject to the following constraints:
	\begin{enumerate}[1)]
		\item The nominal time delayed model~\eqref{Eqn:unpert};
		\item Tighter state and control constraints:
		\begin{subequations}\label{Eqn:constraint-MPC}\vspace{-2mm}
			\begin{align}
				\hat {s}_{k+i},\hat {s}_{k+i-\tau}\in   {\mathcal{S}},\,\,i=0,\dots,N-1\\
				\hat u_{k+i}\in \hat{ {\mathcal{U}}},\,\,i=0,\dots,N-1, \vspace{-2mm}
			\end{align}
			where ${\mathcal{S}}=\{\hat{s}|C\hat s\in  {\mathcal{X}}\ominus{\mathcal{Z}}_x\}$, ${\hat {\mathcal{U}}}={\mathcal{U}}\ominus (K+K_d){\mathcal{Z}}_s$;
			\item The initial and terminal state constraints \vspace{-2mm}
			\begin{align}
				e_{k-j}\in \mathcal{Z}_s,\,& j\in\mathbb{N}_0^{\tau}\label{Eqn:inin-condi}\\
				\hat {s}_{k+N-j}\in {{\mathcal{S}}_f},\, &j\in\mathbb{N}_0^{\tau},\vspace{-2mm}
			\end{align}
		\end{subequations}
	\end{enumerate}
	where ${\mathcal{S}}_f$ is a positive invariant set of~\eqref{Eqn:unpert} under state and control constraints such that $\bar F{\mathcal{S}}_f\subseteq {\mathcal{S}}_f$. 
	
    {\color{black}Let $\mathbb{S}_N$ be the set of lifted states in $\mathcal{S}$ for which the optimization problem~\eqref{Eqn:optimiz1} has a solution, i.e., 
    $\mathbb{S}_N:=\{\{\hat s_{k-j}\}_{j=0}^{\tau}\in\mathcal{S}^{\tau+1}|\mathbb{U}_N\neq \varnothing\},$
    where $\mathbb{U}_N:=\{{\bm u}_0|\text{constraint}\,\eqref{Eqn:constraint-MPC}\, \text{is  fulfilled}\}$.} The following conditions are assumed holding. 
    {\color{black}\begin{assumption}\label{assum:recur-feasi}\hfill
    \begin{enumerate}
    \item The conditions $\{\bm 0\}\subset\hat{\mathcal{U}}$ and $\{\bm 0\}\subset{\mathcal{S}}$ hold;
    \item The optimization problem~\eqref{Eqn:optimiz1} is feasible at the initial time instant $k=0$, i.e., $\mathbb{S}_{N}\neq \varnothing$.
    \end{enumerate}
    \end{assumption}}
	\begin{theorem}[Closed-loop properties]\label{the:rmpc-robust} {\color{black}Under Assumptions \ref{assum:invers-phi}-\ref{assum:recur-feasi}, given $s_{-j}\in\mathbb{S}_N\oplus \mathcal{Z}_s$, $\forall \, j\in\mathbb{N}_0^{\tau}$,} it holds that:
		\begin{enumerate}[(i)] 
			\item {\color{black}the optimization problem~\eqref{Eqn:optimiz1} is recursively feasible at all times $k\in\mathbb{N}$;}
			\item 	the lifted nominal system~\eqref{Eqn:unpert} using $\hat s_k,\hat s_{k-\tau},\hat u_k$ computed with~\eqref{Eqn:optimiz1} converges to the origin asymptotically {\color{black}with a region of attraction $\mathbb{S}_N$}, i.e., 	$\hat s_k,\, \hat x_k,\,\text{and}\,\hat u_k\rightarrow 0\ \text{as}\ k\rightarrow+\infty;$
			\item the state $s$ and the control $u$ of the closed-loop system~\eqref{Eqn:linear_p-residual} with~\eqref{Eqn:real_u}  are such that
			$s_k\rightarrow{\mathcal{Z}}_s\ \text{and}\ u_k\rightarrow K{\mathcal{Z}}_s\ \text{as}\ k\rightarrow+\infty;$
			and $x_k\rightarrow {\mathcal{Z}}_x\ \text{as}\ k\rightarrow+\infty.$
		\end{enumerate}		
		\end{theorem}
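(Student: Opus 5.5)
The proof follows the standard tube-MPC template, adapted to the delayed structure. For (i) we use a shifted candidate solution. Suppose at time $k$ the problem~\eqref{Eqn:optimiz1} has an optimal solution $\{\hat s^*_{k-j}\}_{j=0}^{\tau}$, $\hat{\bm u}^*_k$. At time $k+1$ we take as candidate the initial window $\{\hat s^*_{k+1-j}\}_{j=0}^{\tau}$, i.e., the optimal window shifted by one step, with $\hat s^*_{k+1}$ generated by~\eqref{Eqn:unpert}. The candidate inputs are $\hat u^*_{k+1},\dots,\hat u^*_{k+N-1}$, appended with the terminal feedback $\hat u_{k+N}=K\hat s^*_{k+N}+K_d\hat s^*_{k+N-\tau}$.

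The initial constraint~\eqref{Eqn:inin-condi} must then be checked. The entries $e_{k+1-j}$ for $j\ge1$ were already in $\mathcal{Z}_s$ at time $k$. For the new entry, the error recursion~\eqref{Eqn:e} with $u_k$ from~\eqref{Eqn:real_u} gives $e_{k+1}=Fe_k+F_de_{k-\tau}+\bar w_k$. The RPI property of $\mathcal{Z}_s$ is stated for the pair $(e,e_{-\tau})$ through $\bar F$, so we invoke it as written: if $e,e_{-\tau}\in\mathcal{Z}_s$ then $Fe+F_de_{-\tau}\oplus\bar{\mathcal{W}}\subseteq\mathcal{Z}_s$. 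This yields $e_{k+1}\in\mathcal{Z}_s$.

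Next come the state and input constraints. The shifted nominal states inherit the tightened constraints from time $k$. The appended terminal step stays in $\mathcal{S}_f$ because $\mathcal{S}_f$ is positively invariant under the terminal feedback and the tightened constraints hold inside $\mathcal{S}_f$. Hence the candidate is feasible, and induction from Assumption~\ref{assum:recur-feasi} gives (i).

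For (ii), let $V^*_k$ denote the optimal cost and compare it with the cost of the candidate. The running terms cancel except for the stage cost at time $k$. What remains is the terminal difference
\begin{equation*}
V_f(\hat s_{k+N+1})-V_f(\hat s_{k+N})+\|C\hat s_{k+N}\|_Q^2+\|\hat u_{k+N}\|_R^2 .
\end{equation*}
We expand this using $\hat s_{k+N+1}=F\hat s_{k+N}+F_d\hat s_{k+N-\tau}$ and the structure of $V_f$. Since $V_f$ contains the sum $\sum_{i=1}^{\tau}\|\hat s_{-i}\|_S^2$, shifting the window adds $\|\hat s_{k+N}\|_S^2$ and removes $\|\hat s_{k+N-\tau}\|_S^2$. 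The expression therefore becomes the quadratic form $\zeta^\top\Pi(P,S)\zeta$ with $\zeta=(\hat s_{k+N},\hat s_{k+N-\tau})$, which is nonpositive by~\eqref{Eqn:LYA}. Consequently $V^*_{k+1}-V^*_k\le-\|\hat x_k\|_Q^2-\|\hat u_k\|_R^2$.

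This decrease gives summability of the stage costs, so $\hat x_k\to0$ and $\hat u_k\to0$. To pass to $\hat s_k\to0$ we would use the observability of Proposition~\ref{pro:sta-obser}. Alternatively, we can treat $V^*$ as a Lyapunov–Krasovskii functional and invoke Lemma 6.1 of~\cite{Fridman2014TimeDelay}. This needs an upper bound of the form~\eqref{eqn:LK-1}, which follows from the quadratic nature of $V^*$ on a neighbourhood of the origin and compactness of $\mathbb{S}_N$. It also needs a bound $\alpha\|\hat s_k\|^2$ in~\eqref{eqn:LK-2}, which we obtain by adding a small $\epsilon I$ to $\bar Q$ or by arguing through observability.

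For (iii), we write $s_k=\hat s_k+e_k$ with $e_k\in\mathcal{Z}_s$ for all $k$, and $u_k=\hat u_k+Ke_k+K_de_{k-\tau}$. Then (ii) gives $s_k\to\mathcal{Z}_s$ and $x_k=C\hat s_k+e_{x,k}\to\mathcal{Z}_x$. For the input, $u_k$ converges to $(K+K_d)\mathcal{Z}_s$; we would note that this is the set the statement denotes by $K\mathcal{Z}_s$.

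\textbf{Main obstacle.} The main obstacle is the decrease step. One difficulty is that the delayed terminal cost must telescope correctly across the shifted window. The other is that the RPI condition, stated through $\bar F$, must genuinely handle the delayed error pair. If it does not, we would strengthen it to require $F\mathcal{Z}_s\oplus F_d\mathcal{Z}_s\oplus\bar{\mathcal{W}}\subseteq\mathcal{Z}_s$.
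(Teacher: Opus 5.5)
Your argument for (i), the telescoping of the delayed terminal cost into $\zeta^\top\Pi(P,S)\zeta$ in (ii), and the tube decomposition in (iii) are the same as in the paper's proof. Your reading of the RPI property as $F\mathcal{Z}_s\oplus F_d\mathcal{Z}_s\oplus\bar{\mathcal{W}}\subseteq\mathcal{Z}_s$ is the right one; it is the form the paper itself uses in \eqref{EQN:ROBUST-CON-B}. Apply it consistently, though. The terminal step needs the same strengthening, because $\bar F\mathcal{S}_f\subseteq\mathcal{S}_f$ does not give $F\hat s_{k+N}+F_d\hat s_{k+N-\tau}\in\mathcal{S}_f$ when $\hat s_{k+N}\neq\hat s_{k+N-\tau}$. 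Likewise, $e_k$ and $e_{k-\tau}$ are independent points of $\mathcal{Z}_s$, so the limit set for $u_k$ is $K\mathcal{Z}_s\oplus K_d\mathcal{Z}_s$. That set is in general larger than $(K+K_d)\mathcal{Z}_s$, and identifying it with the statement's $K\mathcal{Z}_s$ is not justified.

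The genuine gap is the step from $\hat x_k=C\hat s_{k|k}\to 0$ to $\hat s_{k|k}\to 0$. Both $\hat s_k\to 0$ in (ii) and $s_k\to\mathcal{Z}_s$ in (iii) depend on it. Your decrease controls only $\|\hat s_{k|k}\|_{\bar Q}^2$, and $\bar Q=C^\top QC$ is singular whenever $n_\psi>n$. The paper closes this step by citing $\Psi(0)=0$, which does not work either: $\hat s_{k|k}$ is a free decision variable propagated by the linear model \eqref{Eqn:unpert}, not a point of $\mathcal{S}_\Psi$.

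Neither of your remedies closes the step under Assumptions~\ref{assum:invers-phi}--\ref{assum:recur-feasi}:
\begin{itemize}
\item Proposition~\ref{pro:sta-obser} only characterizes observability; nothing assumes it.
\item Adding $\epsilon I$ to $\bar Q$ helps only if it enters the stage cost \eqref{Eqn:V_b} (and \eqref{Eqn:LYA}), so it proves the result for a different controller. In that case summability of $\epsilon\|\hat s_{k|k}\|^2$ finishes immediately, without the Lyapunov--Krasovskii lemma.
\end{itemize}
More fundamentally, neither that lemma nor an observability argument applies to $\{\hat s_{k|k}\}$. The window $\{\hat s_{k-j}\}_{j=0}^{\tau}$ is re-optimized at every step from the measured $s_{k-j}$. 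So this sequence is not a trajectory of \eqref{Eqn:unpert} under any feedback, and $V^*_k$ is a function of the measured window rather than of $(\hat s_{k|k},\ldots,\hat s_{k-\tau|k})$.

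Observability would therefore have to be applied to the predicted trajectories $\hat s_{k+i|k}$. The one-step decrease, however, only gives $\|\hat x_k\|_Q^2+\|\hat u_k\|_R^2\to 0$, not vanishing predicted outputs over an observability window. You cannot iterate the shift to get more, because the $M$-fold shifted plan from time $k$ need not be feasible at $k+M$: the intermediate controls are generated from the re-optimized plans, not from the time-$k$ plan.

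As it stands you have proved $\hat x_k,\hat u_k\to 0$, hence $x_k\to\mathcal{Z}_x$ and $u_k\to K\mathcal{Z}_s\oplus K_d\mathcal{Z}_s$. Getting $\hat s_k\to 0$ and $s_k\to\mathcal{Z}_s$ requires an additional hypothesis. The simplest is a stage cost positive definite in $\hat s$, which is effectively what the simulations use, since their $Q$ is of size $n_\psi$.
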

		\textbf{Proof.} Claim (i): {\color{black}We prove this claim by mathematical induction.} As a first step, we assume that at time $k$, the optimal decision variables $\{\hat s_{k-j|k}\}_{j=0}^{\tau}$, $\bm {\hat u}^{\ast}_{k|k}$ are calculated with~\eqref{Eqn:optimiz1} (associated with the optimal cost $V^{\ast}_k$),
		such that $\hat{s}_{k+i},\hat{s}_{k+i-\tau}\in {\mathcal{S}}$,  $\hat {u}_{k+i}\in{\hat{\mathcal{U}}}$, $\forall i\in\mathbb{N}_0^{N-1}$, $\hat {s}_{k+N-j|k}\in{\mathcal{S}}_f$, $\forall j\in\mathbb{N}_0^{\tau}$, as well as the real state and control constraints being verified, i.e., $x_{t-j|k},x_{t|k}\in\mathcal{X}$ and $u_{t|k}\in\mathcal{U}$ for all $t\geq k$ and $j\in\mathbb{N}_0^{\tau}$. At the next time $k+1$, set $\hat{s}_{k+1-j|k+1}=\hat{s}_{k+1-j|k}$ for $j\in\mathbb{N}_0^{\tau}$, $\bm{\hat u}^s_{k+1}=\bm {\hat u}^{\ast}_{k+1:k+N-1|k}, K\hat{s}_{k+N|k}+K_d\hat{s}_{k+N-\tau|k}$ (associated with a sub-optimal cost $V^{s}_{k+1}$), {\color{black}which basically shifts previously optimal sequence forward by one step, leading to a feasible but suboptimal solution}. Then, one has  ${s}_{k+1-j}-\hat{s}_{k+1-j|k+1}\in {\mathcal{Z}}_s,$ for $j\in\mathbb{N}_0^{\tau}$, and $ 
		\hat s_{k+i+1|k+1},\hat s_{k+i+1-\tau|k+1}\in {\mathcal{S}},\, \forall i\in\mathbb{N}_{1}^{N-1}$ in view of the definition of ${\mathcal{S}}$. Also, one has $\hat{s}_{k+N+1-j|k+1}\in{\mathcal{S}}_f$, $\forall j\in\mathbb{N}_0^{\tau}$, in view of the definition of ${\mathcal{S}}_f$.  
		Hence,~\eqref{Eqn:optimiz1} is feasible at time $k+1$. The recursive feasibility of r-KMPC holds. \\
		Claim (ii): Note that $V^{\ast}_{k+1}\leq V^s_{k+1}$,   it follows that:
\begin{equation}\label{Eqn:V-MONO-r}
			\begin{array}{ll}
				&V^{\ast}_{k+1}- V^{\ast}_k\leq V^{s}_{k+1}- V^{\ast}_k\\
				&=-\|\hat {x}_k\|_{Q}^2-\|\hat u_k\|_{R}^2+\\
				&\underbrace{\left\{V_f(\hat {s}_{k+N+1})-V_f(\hat {s}_{k+N})\right\}}+\|\hat {s}_{k+N}\|_{\bar Q}^2+\|\hat u_{k+N}\|_{R}^2.\\
				&\hspace{17mm}\Delta V_f
			\end{array}
		\end{equation}
		Note  that $V_f(\hat s_{k+N})\geq 0$ and $V_f(\hat s_{k+N})\leq \|\hat s_{k+N}\|_P^2+\tau \left\{\max_{j\in [-\tau,0]}\|\hat s_{k+N+j}\|_S^2\right\}\leq (\lambda_{\rm max}(P)+\tau\lambda_{\rm max}(S))\cdot\break\left\{\max_{j\in [-\tau,0]}\|\hat s_{k+N+j}\|^2\right\}$, which satisfies~\eqref{eqn:LK-1}. Also, in view of the definition of $\Pi(P,S)$ and under the feedback control law $u_{k+N}=K\hat{s}_{k+N}+K_d\hat{s}_{k+N-\tau}$, one has $$\Delta V_f+\|\hat {s}_{k+N}\|_{\bar Q}^2+\|\hat u_{k+N}\|_{R}^2=\|(\hat {s}_{k+N},\hat {s}_{k+N-\tau})\|_{\Pi(P,S)}^2.$$
		where the equality is due to~\eqref{Eqn:LYA}. Hence, $\Delta V_f\leq -\|\hat {s}_{k+N}\|_{\bar Q}^2-\|\hat u_{k+N}\|_{R}^2$, leading to~\eqref{eqn:LK-2} being fulfilled. In other words, $V_f(\hat{s})$ is a Lyapunov-Krasovskii functional satisfying~\eqref{EQN:LY-K}. 
		Promptly, it holds that $V^{\ast}$ is bounded since $V_f$ is bounded, and from~\eqref{Eqn:V-MONO-r} one derives that $$V^{\ast}_{k+1}- V^{\ast}_k\leq -\|\hat {x}_k\|_{Q}^2-\|\hat u_k\|_{R}^2.$$
		Then, from~\eqref{Eqn:V-MONO-r},  $V^{\ast}_{k+1}- V^{\ast}_k\rightarrow0$ as $k\rightarrow+\infty$. Consider also $ V^{\ast}_k-V^{\ast}_{k+1}\geq \|\hat {x}_k\|_{ Q}^2+\|\hat u_k\|_{R}^2,$ hence $\hat u_k\rightarrow 0$, $\hat x_k\rightarrow 0$, as $k\rightarrow +\infty$, in view of the positive-definiteness of $R$ and $Q$. Also, $\hat{s}_k\rightarrow 0$ as $k\rightarrow +\infty$ due to $\Psi(0)=0$ in view of Assumption~\ref{assu:phi-ori}.\\
		Claim (iii):
		Recall that $x\in\hat x\oplus \mathcal{Z}_x$, $u\in\hat u\oplus (K+K_d)\mathcal{Z}_s$, and $s\in\hat s\oplus \mathcal{Z}_s$, it holds that, $ x_k\rightarrow {\mathcal{Z}}_x\ \text{ and }\ u_k\rightarrow (K+K_d){\mathcal{Z}}_s\ \text{ as }\ k\rightarrow +\infty,$ and $s_k\rightarrow{\mathcal{Z}}_s\ \text{as}\ k\rightarrow +\infty.$
		\hfill$\blacksquare$
		We next show the asymptotic stability under nonexogenous disturbance, i.e. $w_0=0$. In view of the convergence of the nominal system~\eqref{Eqn:unpert}, one can write from~\eqref{Eqn:lipsch-1} that
		\begin{equation}
			\begin{array}{lll}
				\|\bar w\|&\leq& L_s \|s\|+L_{s,d} \|s_{-\tau}\|+L_u \|u\|\\
				&\leq&\|s\|_{\bar L_s}+\|s_{-\tau}\|_{\bar L_{s,d}},
			\end{array}
		\end{equation} 
		where $\bar L_s=(L_s+L_uK^{\top}K)$ and $\bar L_{s,d}=(L_{s,d}+L_uK_d^{\top}K_d)$.
		\begin{theorem}[Asymptotic stability]\label{the:rmpc-convergence} If $w_o=0$ and there exists a scalar $\gamma>0$ such that \begin{equation}\label{Eqn:iss-con}
				\begin{array}{ll}
					\tilde{\Pi}(P,S)\hspace{-2mm}&=\begin{bmatrix}
						{\Pi}_1(P,S)+\gamma\bar L_s&\star&\star\\F_d^{\top}PF+K_d^{\top}RK&{\Pi}_2(P,S)+\gamma\bar L_{s,d}&\star\\
						PF&PF_d&P-\gamma I
					\end{bmatrix}\vspace{2mm}\\
					\hspace{-2mm}&\preceq 0,
				\end{array}
			\end{equation}
			the  closed-loop system~\eqref{Eqn:linear_p-residual} and~\eqref{Eqn:non-model} with~\eqref{Eqn:real_u} converge to the origin asymptotically, i.e.,
			$x_k\rightarrow 0,\ \ u_k\rightarrow 0,\   \text{and}\ s_k\rightarrow 0$ asymptotically.
		\end{theorem}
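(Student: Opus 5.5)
The plan is to build a Lyapunov--Krasovskii functional for the true closed loop and then reuse Lemma 6.1 of~\cite{Fridman2014TimeDelay}, as in the proof of Theorem~\ref{the:rmpc-robust}. The key observation is that when $w_o=0$, the residual $\bar w$ is no longer a bounded exogenous signal. It is state dependent: by~\eqref{Eqn:lipsch-1} and the bound stated just before the theorem, $\|\bar w\|^2\leq \|s\|_{\bar L_s}+\|s_{-\tau}\|_{\bar L_{s,d}}$, where the right-hand side is read as a quadratic form in $(s,s_{-\tau})$ (after squaring, possibly absorbing constants into $\bar L_s,\bar L_{s,d}$). I would therefore treat $\bar w$ as a sector-bounded perturbation and use an S-procedure with multiplier $\gamma$. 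This is exactly what the extra row and column of $\tilde\Pi(P,S)$ in~\eqref{Eqn:iss-con} encode.

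The first step handles the nominal part. By Theorem~\ref{the:rmpc-robust}(ii), $\hat s_k,\hat u_k\to0$, so it suffices to show that the error $e$ of~\eqref{Eqn:e} converges to zero. Since $s=\hat s+e$, the bound on $\bar w$ can be rewritten in terms of $(e,e_{-\tau})$ plus terms that vanish asymptotically.

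The second step considers the functional $V_e(e_k)=\|e_k\|_P^2+\sum_{i=1}^{\tau}\|e_{k-i}\|_S^2$, defined on the delayed error trajectory. Using $e^+=Fe+F_de_{-\tau}+\bar w$, I would compute
\begin{equation*}
\Delta V_e+\|e\|_{\bar Q}^2+\|Ke+K_de_{-\tau}\|_R^2=\|(e,e_{-\tau},\bar w)\|^2_{M},
\end{equation*}
where $M$ has the upper-left $2\times2$ block $\Pi(P,S)$, cross terms $PF$ and $PF_d$ with $\bar w$, and lower block $P$. Adding and subtracting $\gamma\big(\|e\|_{\bar L_s}+\|e_{-\tau}\|_{\bar L_{s,d}}-\|\bar w\|^2\big)$ gives $\Delta V_e\leq \|(e,e_{-\tau},\bar w)\|^2_{\tilde\Pi}-\|e\|_{\bar Q}^2-\gamma(\cdots)$. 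Because $\gamma(\cdots)\geq0$ by the sector bound and $\tilde\Pi\preceq0$, this yields $\Delta V_e\leq-\|e\|^2_{\bar Q}-\|Ke+K_de_{-\tau}\|_R^2$. To get strict decrease in $\|e\|^2$ as required by~\eqref{eqn:LK-2}, I would use a slightly perturbed strict version of the LMI, or note that $\bar Q$ together with observability gives the needed positivity. The upper bound~\eqref{eqn:LK-1} follows exactly as before, with $\beta=\lambda_{\max}(P)+\tau\lambda_{\max}(S)$.

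The main obstacle is the coupling: the sector bound is in terms of $s=\hat s+e$, not $e$ alone. I would first try to apply the argument directly to the full lifted state. Write $s^+=As+A_ds_{-\tau}+Bu+\bar w$ with $u=\hat u+Ks+K_ds_{-\tau}-(K\hat s+K_d\hat s_{-\tau})$, so that the closed loop is a stable delayed system with the sector-bounded $\bar w$ plus an input $\hat u-K\hat s-K_d\hat s_{-\tau}$ that tends to zero. The decrease inequality then gives an ISS-type bound, so $s_k\to0$ follows from the vanishing input by a standard converging-input argument.

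Once $s_k\to0$, we get $u_k\to0$ from~\eqref{Eqn:real_u} and $x_k=\Psi^{-1}(s_k)\to0$ by the local Lipschitz continuity of $\Psi^{-1}$ and Assumption~\ref{assu:phi-ori}. This gives the claims for both~\eqref{Eqn:linear_p-residual} and~\eqref{Eqn:non-model}.
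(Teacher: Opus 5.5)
Your final argument is the paper's proof. The paper uses $V_{P,k}=\|s_k\|_P^2+\sum_{i=1}^{\tau}\|s_{k-i}\|_S^2$ on the full lifted state (not on $e$) and treats $\bar w$ as sector-bounded. It obtains $\tilde\Pi(P,S)\preceq 0$ from the S-procedure with multiplier $\gamma$ and then appeals to Lemma 6.1 of~\cite{Fridman2014TimeDelay}, exactly as you do once you drop $V_e$. Where you differ, you are more careful. The paper writes the closed loop as $s_{k+1}=Fs_k+F_ds_{k-\tau}+\bar w_k$, silently discarding $B(\hat u_k-K\hat s_k-K_d\hat s_{k-\tau})$ and its contribution to $u$ in the bound on $\bar w$. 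You correctly keep this term as a vanishing input.

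The step you hedge on, however, is closed by neither argument. With the non-strict $\tilde\Pi(P,S)\preceq 0$, the S-procedure only guarantees $\Delta V\le-\|Cs_k\|_Q^2-\|Ks_k+K_ds_{k-\tau}\|_R^2$. Since $\bar Q=C^{\top}QC$ has rank at most $n<n_{\psi}$, condition \eqref{eqn:LK-2} does not follow. When $\bar w$ sits on the sector boundary the slack term vanishes, and nothing forces decrease in directions where $Cs=0$ and $Ks+K_ds_{-\tau}=0$.

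Neither of your proposed remedies fixes this. Observability of $(A+A_d,C)$ gives no pointwise bound $\|Cs\|^2\geq\alpha\|s\|^2$. A detectability or LaSalle-type argument also fails, because $\bar w$ acts as an unknown input of size comparable to $\|s\|$. The same missing margin breaks your converging-input (ISS) step. The cross terms between the vanishing input and $(s_k,s_{k-\tau})$ appear both in the dynamics and in the sector bound, and they can only be absorbed if you have a strict decrease.

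To make the argument rigorous you need $\tilde\Pi(P,S)\prec 0$, which gives $\Delta V\le-\epsilon(\|s_k\|^2+\|s_{k-\tau}\|^2)$ on the sector. That is an extra hypothesis. The paper's concluding ``hence $s,u$ converge to the origin'' also needs it, but never states it.
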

		\textbf{Proof.} Note that  $\hat u$, $\hat s$, $\hat x$ converge to the origin asymptotically as shown in Theorem~\ref{the:rmpc-robust}. Under $w_o=0$, we write~\eqref{Eqn:linear_p-residual} in the following autonomous system:
		\begin{equation}\label{Eqn:linear_p-residual-no-dis}
			\left\{\begin{array}{ll}
				{s}_{k+1}=F{s}_k+F_ds_{k-\tau}+\bar w_k,\\
				\|\bar w_k\|\leq  \|s\|_{\bar L_s}+\|s_{-\tau}\|_{\bar L_{s,d}},\\
				x_k=C{s}_k+v_k.
			\end{array}\right.
		\end{equation}
		Choose a Lyapunov-Krasovskii functional $V_{P,k}=\|{s}_k\|^2_P+\sum_{i=1}^{\tau}\|{s}_{k-i}\|_{S}^2$. The difference of the Lyapunov-Krasovskii functional at adjacent time instants are
		\begin{equation}
			\begin{array}{lll}
				\Delta V_P(k)&=&\|{s}_{k+1}\|^2_P-\|{s}_{k}\|^2_P+\|{s}_{k}\|_{S}^2-\|{s}_{k-\tau}\|_{S}^2\\
				&=&\|s_k\|^2_{{\Pi}_1(P,S)}+\|s_{k-\tau}\|^2_{{\Pi}_2(P,S)}+\|\bar w_k\|_P^2\\
				&&+2s_k^\top F^\top  P F_d s_{-\tau}+2[s_k^{\top}\,s_{k-\tau}^{\top}]^{\top}[F\,F_d]^{\top}P\bar w_k,
			\end{array}
		\end{equation} resulting in~\eqref{Eqn:iss-con} by applying the S-procedure in~\cite{1994Linear}. Hence, $s,u$  converge to the origin asymptotically. Consequently, $x=Cs+v\rightarrow 0$ asymptotically due to $\Psi(x)=0$. \hfill$\blacksquare$
		\section{Adaptive KMPC for time delayed systems}\label{sec:adaptive}
		In this section, we present the robust Koopman MPC with state and control constraints updated online, termed A-KMPC, where uncertainty sets $\mathcal{W}$ and $\mathcal{V}$ are recursively updated to reduce conservativity.
		\subsection{Uncertainty sets updates}
		In view of $\Psi^{-1}(s)=Cs+v(s)$ in Assumption~\ref{assum:invers-phi} and of its Lipschitz property, one has $v(s)=\Psi^{-1}(s)-Cs$, such that 
		\begin{equation}\label{Eqn:inv-phi-lipsch}
				\|v(s_1)-v(s_2)\|\leq L_v \|s_1-s_2\|, 
		\end{equation}
		for all $s_1,\,s_2\in\mathcal{S}_{\Psi}$, where $L_v$ is the Lipschitz constant. 
		In view of~\eqref{Eqn:lipsch-1} and~\eqref{Eqn:inv-phi-lipsch}, by setting $s_1=s$, $u_1=u$, $w_{o,1}=w_o$, $s_2=u_2=w_{o,2}=0$, one has 
		\begin{equation}\label{Eqn:dis-set}
			\left\{\begin{array}{ll}
				\|v\|\leq L_v \|s\|\\
				\|w\|\leq L_s \|s\|+L_{s,d} \|s_{-\tau}\|+L_u \|u\|+L_w \|w_o\|,
			\end{array}\right.
		\end{equation} for all $s\in\mathcal{S}_{\Psi}$, $u\in \mathcal{U}$, and $w_o\in\mathcal{W}_o$. 	 
		Since $w$ and $v$ are parametric uncertainties, we allow both sets  to be updated online along with variations of $s$ and $u$ in each time $l$ belonging to the prediction window $[k,k+N]$, of type:
		\begin{subequations}\label{Eqn:dis-set-adap}
			\begin{align}
				\mathcal{V}^l&=\{v|H_vv_{l}\leq b_{v,l}\}\\
				{\mathcal{W}}^l&=\{w|H_w w_{l}\leq b_{w,l}\},
			\end{align}
			where $H_v=[I_{n}\ -I_n]^{\top}$, $b_{v,l}=(\bar b_{v,l},\bar b_{v,l})$, $\bar b_{v,l}=L_v \bar s_l$, $H_w=[I_{n_{\psi}}\ -I_{n_{\psi}}]^{\top}$, $b_{w,l}=(\bar b_{w,l},\bar b_{w,l})$, {\color{black} $\bar b_{w,l}=(L_s+L_{s,d}) \bar s_l+\bar L_u \bar u_l+L_{w,l}\bar w_o$, where $\bar L_u=\sup_{\scriptscriptstyle Bu\neq 0, u\in\mathcal{U}}L_u \|u\|/\|B u\|$}, 
			$\bar w_o=\arg\max_{w\in\mathcal{W}_o} \|w\|_{\infty}$, $\bar s_l\in\mathbb{R}^{n_{\psi}}$ and $\bar u_l\in\mathbb{R}^{m}$ are optimization variables based on which, $\mathcal{V}^l$ and $\mathcal{W}^l$ are updated due to the following constraints:
			\begin{align}
				&-\bar s_{l} \leq s_{l}\leq \bar s_{l}\label{EqnEqn:dis-set-adap-c}\\
				&-\bar u_{l} \leq B u_{l}\leq  \bar u_{l}\label{EqnEqn:dis-set-adap-d}.
			\end{align}
			Letting $\mathcal{Z}_s^l$ be the updated robust positively invariant set of~\eqref{Eqn:e} with $\bar w_{l}\in\mathcal{W}^l\oplus D\mathcal{W}_o$, we enforce that
			\begin{align}
				&\bar s_{l}\geq h_{3,l}\  \text{ and} \ \bar u_{l}\geq h_{4,l}, \label{Eqn:dis-set-adap-e}
			\end{align}
			where $[h_{4,l}]_i=\max_{e,e_{-\tau}\in\mathcal{Z}_s^l}|[BK]_ie+[BK_d]_ie_{-\tau}|$, $[h_{3,l}]_i=\max_{e\in\mathcal{Z}_s^l}|[e]_i|$. 
		\end{subequations}
		Also, in view of the constraint satisfaction design of robust Koopman MPC (see~\eqref{Eqn:e} and~\eqref{Eqn:constraint-MPC}), 
		for all $l\in[k,k+N-1]$, the variations of $\mathcal{W}^l$, $\mathcal{V}^l$, and $\mathcal{Z}_s^l$, must fulfill the following constraints: 
		\begin{subequations}\label{Eqn:robust-constrant}
			\begin{align}
				&{e}_{k-j}\in \mathcal{Z}_s^k,\, j\in\mathbb{N}_0^{\tau}\quad \label{EQN:ROBUST-CON-A}\\
				&F\mathcal{Z}_s^l\oplus F_d\mathcal{Z}_s^{l-\tau}\oplus\bar{\mathcal{W}^l}\subseteq \mathcal{Z}_s^{l+1} \label{EQN:ROBUST-CON-B}\\
				&Cs_{l}\in \mathcal{X}\ominus \mathcal{V}^{l}\label{EQN:ROBUST-CON-C}\\& u_{l}\in\mathcal{U},\label{EQN:ROBUST-CON-D}
			\end{align}
		\end{subequations}
		where $\bar{\mathcal{W}}^l={\mathcal{W}}^l\oplus D\mathcal{W}_o$.\\
		%
		In the following, we focus on the formulation of time-varying constraints~\eqref{Eqn:robust-constrant} in the online nominal MPC problem, inspired by the work~\cite{lorenzen2019robust}. To proceed, let the initial robust positively invariant set ${\mathcal{Z}}_s^0=\mathcal{Z}_s=\{e|H_ee\leq\bm 1\}$ with vertices $e^1,\cdots,e^{v}$. For all $l\in[k,k+N]$, we introduce  $r_{l}\in \mathbb{R}^{n}$ and a scaling factor $\gamma_{l}\geq 0$, to define $\mathcal{Z}_s^l$ of type
		\begin{equation}\label{Eqn:op-robust-set}
			\begin{array}{ll}
				{\mathcal{Z}}_s^l&={r_{l}}\oplus \gamma_{l}{\mathcal{Z}_s^0}\vspace{1mm}\\
				&=\{e|H_e(e-r_{l})\leq\gamma_{l}\bm 1\},
			\end{array}
		\end{equation}
		where $H_e\in\mathbb{R}^{p\times n_{\psi}}$, $r_{l}$ and $\gamma_{l}$ are decision variables to be optimized in the MPC problem for fulfillment of~\eqref{Eqn:robust-constrant}.\\
		%
		{\color{black}
			To describe the following proposition in a compact form, we herewith introduce some necessary notations and definitions. Let $\bar{\mathcal{S}}_{\Psi}=\{s|Cs\in\mathcal{X}\ominus\mathcal{V}\}=\{s|H_ss\leq \bm 1\}$, $H_s\in\mathbb{R}^{q\times n_{\psi}}$ and $\mathcal{U}=\{u|G_uu\leq \bm 1\}$, $G_u\in\mathbb{R}^{g\times m}$.  For any $i\in\mathbb{N}$, we define  
			$[\tilde e_{h_1}]_i=\max_{j\in\mathbb{N}_1^v}[H_{s}F]_ie^j$, $[\tilde e_{h_1}^{\tau}]_i=\max_{j\in\mathbb{N}_1^v}[H_{s}F_d]_ie_{-\tau}^j$, $[\tilde e_{h_2}]_i=\max_{j\in\mathbb{N}_1^v}[H_{s}]_ie^j$, $[\bar e]_i ([\underline{e}]_i)=\max({\rm min})_{j\in\mathbb{N}_1^v}[e^j]_i$, $[\tilde e_g]_i=\max_{j\in\mathbb{N}_1^v}[G_uK]_ie^j$, $[\tilde e_g^{\tau}]_i=\max_{j\in\mathbb{N}_1^v}[G_uK_d]_ie_{-\tau}^j$, $[\tilde e]_i=\max_{j\in\mathbb{N}_1^v}|[e^j]_i|$, 
			$[\tilde w_{o}]_i=\max_{w\in\mathcal{W}_o}[H_sD]_iw,\break$ $[\overline {e}_b]_i([\underline{e}_b]_i)=\max({\rm min})_{j\in\mathbb{N}_1^v}[BK]_ie^j$, $[\overline {e}_b^{\tau}]_i([\underline{e}_b^{\tau}]_i)=\max({\rm min})_{j\in\mathbb{N}_1^v}[BK_d]_ie_{-\tau}^j$.} 
		The following proposition about the prediction sets is obtained.
		\begin{proposition}[Online sets updates]
			Let $\bm r_{\scriptscriptstyle k:k+N}$, $\bm \gamma_{\scriptscriptstyle k:k+N}$, $\bm {\bar u}_{\scriptscriptstyle k:k+N-1}$, $\bm{\bar s}_{\scriptscriptstyle k:k+N-1}$, $\bm{\hat u}_{\scriptscriptstyle k}$, and $\hat s_{k-j}$, $\forall j\in\mathbb{N}_0^{\tau}$, be the MPC decision variables at the time instant $k$. Constraints~\eqref{Eqn:dis-set-adap} and~\eqref{Eqn:robust-constrant} are satisfied  for all $l\in[k,k+N]$,  iff there exists a matrix ${\Lambda}=[\lambda_1^{\top}\cdots\lambda_q^{\top}]$, $\lambda_i\in\mathbb{R}^{2 n_{\psi}}$, such that
			\begin{subequations}\label{Eqn:adap-constr}
				\begin{align}
					&	H_e(s_{k-j}-\hat s_{k-j}-r_{k-j})\leq \gamma_{k-j}\bm 1,\, j\in\mathbb{N}_0^{\tau}\label{Eqn:constr-a-delay}\\
					&H_s(\hat s_{l+1}+Fr_{l})+ \Lambda b_{w,l}\leq \bm 1-\gamma_{l}\tilde e_{h_1}-\gamma_{l-\tau}\tilde e_{h_1}^{\tau}-\tilde w_{o}\label{Eqn:constr-b}\\
                    & [H_s]_i-\sum_{j=1}^{2n_{\psi}}[\lambda_i]_j[H_I]_j=0,\ i=1,\cdots,q\label{Eqn:constr-c}\\
                    &H_s(\hat s_{l}+r_{l})\leq \bm 1-\gamma_{l}\tilde e_{h_2}\label{Eqn:constr-d}\\
                    &G_u(\hat u_{l}+Kr_{l}+K_d r_{l-\tau})\leq \bm 1- \gamma_{l}\tilde e_g-\gamma_{l-\tau}\tilde e_g^{\tau}\label{Eqn:constr-e}\\
					&-\bar s_{l}- \gamma_{l}\underline{e} \leq \hat s_{l}+r_{l}\leq \bar s_{l}-\gamma_{l}\bar e\label{Eqn:constr-f}\\
                    &-\bar u_{l}-\underline{e}_{L}\leq B(\hat u_{l}+K r_{l}+K_d r_{l-\tau})\leq  \bar u_{l}+\overline{e}_{L}\label{Eqn:constr-g}\\
					&\bar s_{l}\geq h_{3,l}\  \text{ and} \ \bar u_{l}\geq h_{4,l}, \label{Eqn:dis-set-adap-e-final}
				\end{align}
			\end{subequations}
			where $\underline{e}_{L}=\gamma_{l}\underline{e}_b+\gamma_{l-\tau}\underline{e}_b^{\tau}$, $\overline{e}_{L}=\gamma_{l}\overline {e}_b + \gamma_{l-\tau}\overline {e}_b^{\tau}$, $h_{3,l}$ and $h_{4,l}$ are rewritten as  $h_{3,l}=r_{l}+\gamma_{l}\tilde e$, $h_{4,l}=BKr_{l}+BK_dr_{l-\tau}+\gamma_{l}\bar e_b+\gamma_{l-\tau}\bar e_b^{\tau}$.
		\end{proposition}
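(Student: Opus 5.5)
The plan is to prove the equivalence constraint by constraint. Each set-valued requirement in \eqref{Eqn:dis-set-adap} and \eqref{Eqn:robust-constrant} is translated into linear inequalities in the decision variables. The tool is the parametrization \eqref{Eqn:op-robust-set}, $\mathcal{Z}_s^l=r_l\oplus\gamma_l\mathcal{Z}_s^0$. For any linear map $M$, the support function of this set in a direction $m$ equals $m^\top M r_l+\gamma_l\max_{j}m^\top Me^j$, because a linear function over a polytope attains its maximum at a vertex $e^j$. All the precomputed vectors $\tilde e_{h_1},\tilde e_{h_2},\tilde e_g,\bar e,\underline e,\ldots$ are exactly such vertex maxima. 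So every containment of the form ``$\hat s_l\oplus\mathcal{Z}_s^l\subseteq\{H_s s\le \bm 1\}$'' becomes a row-wise inequality that is both necessary and sufficient.

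\textbf{Step 1 (initial condition).} Expand \eqref{EQN:ROBUST-CON-A} with $e_{k-j}=s_{k-j}-\hat s_{k-j}$ and the half-space description in \eqref{Eqn:op-robust-set}. This gives \eqref{Eqn:constr-a-delay} directly.

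\textbf{Step 2 (state, input and box constraints).}
\begin{itemize}
\item Write $s_l\in\hat s_l\oplus\mathcal{Z}_s^l$. Requiring $H_s s_l\le\bm 1$ for all such $s_l$ and taking row-wise maxima over vertices yields \eqref{Eqn:constr-d}, which is equivalent to \eqref{EQN:ROBUST-CON-C}.
\item Similarly, $u_l=\hat u_l+Ke_l+K_de_{l-\tau}$ with $e_l\in\mathcal{Z}_s^l$ and $e_{l-\tau}\in\mathcal{Z}_s^{l-\tau}$ chosen independently. The maximum of $G_u u_l$ splits additively, giving \eqref{Eqn:constr-e}, which is equivalent to \eqref{EQN:ROBUST-CON-D}.
\item The box constraints \eqref{EqnEqn:dis-set-adap-c} and \eqref{EqnEqn:dis-set-adap-d} must hold for every realization. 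They translate componentwise, using $\bar e,\underline e$ and $\bar e_b,\underline e_b$, into \eqref{Eqn:constr-f} and \eqref{Eqn:constr-g}.
\item Evaluating the maxima in the definitions of $h_{3,l},h_{4,l}$ over the shifted and scaled polytope gives the closed forms stated, and hence \eqref{Eqn:dis-set-adap-e-final}.
\end{itemize}

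\textbf{Step 3 (tube propagation \eqref{EQN:ROBUST-CON-B}).} I expect this to be the main obstacle, because $\mathcal{W}^l$ is itself parametrized by the decision variable $b_{w,l}$. A uniform support-function bound over it is then bilinear unless dualized. I would impose the containment in the form the proposition actually uses: the propagated tube plus the nominal state must stay in $\{H_s s\le\bm 1\}$. That is, for each row $i$,
\[
[H_s]_i(\hat s_{l+1}+Fr_l)+\gamma_l[\tilde e_{h_1}]_i+\gamma_{l-\tau}[\tilde e^{\tau}_{h_1}]_i+[\tilde w_o]_i+\max_{H_w w\le b_{w,l}}[H_s]_i w\le 1 .
\]
The first three terms come from the vertex argument applied to $F\mathcal{Z}_s^l$ and $F_d\mathcal{Z}_s^{l-\tau}$. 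The term $\tilde w_o$ comes from $D\mathcal{W}_o$.

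For the last term, $\max\{[H_s]_iw: H_ww\le b_{w,l}\}$ is a feasible, bounded LP, since $H_w$ describes a box. By strong LP duality it equals $\min\{\lambda_i^\top b_{w,l}:\lambda_i\ge 0,\ H_w^\top\lambda_i=[H_s]_i^\top\}$. Hence the inequality holds iff there exists a multiplier $\lambda_i\ge0$ satisfying \eqref{Eqn:constr-c}, with $H_I=H_w$, together with the row $i$ of \eqref{Eqn:constr-b}. Stacking $\lambda_i$ over rows gives $\Lambda$.

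Sufficiency follows from weak duality and necessity from strong duality. This is the standard argument of \cite{lorenzen2019robust}, and I would simply verify that the delayed term $F_d\mathcal{Z}_s^{l-\tau}$ only adds a fixed, separately maximized contribution.

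Collecting Steps 1–3 for all $l\in[k,k+N]$ gives both directions of the equivalence.
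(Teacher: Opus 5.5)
Your plan is the paper's own proof: translate each constraint through the vertex description of $\mathcal{Z}_s^l=r_l\oplus\gamma_l\mathcal{Z}_s^0$, and handle the decision-dependent box $\mathcal{W}^l$ by LP duality as in \cite{lorenzen2019robust}. The genuine gap is the one you half-flag in Step 3 and then drop in your final sentence. Constraint \eqref{EQN:ROBUST-CON-B} is the tube-propagation inclusion $F\mathcal{Z}_s^l\oplus F_d\mathcal{Z}_s^{l-\tau}\oplus\bar{\mathcal{W}}^l\subseteq r_{l+1}\oplus\gamma_{l+1}\mathcal{Z}_s^0$. Its exact translation runs over the rows of $H_e$ and involves $(r_{l+1},\gamma_{l+1})$. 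What you dualize instead is a different requirement, $\hat s_{l+1}\oplus F\mathcal{Z}_s^l\oplus F_d\mathcal{Z}_s^{l-\tau}\oplus\bar{\mathcal{W}}^l\subseteq\{s\mid H_s s\le\bm 1\}$; the paper's proof makes the same substitution and calls it an equivalence. Apart from \eqref{Eqn:constr-a-delay}, no constraint in \eqref{Eqn:adap-constr} relates $(r_{l+1},\gamma_{l+1})$ to the propagated set, so the ``if'' direction fails. For example, take a feasible point with $\hat s\equiv 0$, $\hat u\equiv 0$, $r\equiv 0$ and set $\gamma_{l+1}=0$: all of \eqref{Eqn:adap-constr} stays satisfied, but \eqref{EQN:ROBUST-CON-B} is violated as soon as $\bar{\mathcal{W}}^l\neq\{0\}$. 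What is missing is, for every row $h^\top$ of $H_e$, the constraint $h^\top(Fr_l+F_dr_{l-\tau}-r_{l+1})+\gamma_l\max_j h^\top Fe^j+\gamma_{l-\tau}\max_j h^\top F_de^j+\max_{w\in\mathcal{W}_o}h^\top Dw+\max_{w\in\mathcal{W}^l}h^\top w\le\gamma_{l+1}$, with the last term dualized by its own multipliers. Together with \eqref{Eqn:constr-d} at $l+1$, this implies your $H_s$-containment. A milder mismatch of the same kind affects \eqref{EQN:ROBUST-CON-C}: $H_s$ encodes $\mathcal{X}\ominus\mathcal{V}$ for the fixed $\mathcal{V}$, not for the decision-dependent $\mathcal{V}^l$. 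So \eqref{Eqn:constr-d} is equivalent to it only when $\mathcal{V}^l=\mathcal{V}$.

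Second, several steps where you say the computation ``gives'' the printed inequality do not survive the computation you describe. First, the vertex argument applied to $F_d\mathcal{Z}_s^{l-\tau}=F_dr_{l-\tau}\oplus\gamma_{l-\tau}F_d\mathcal{Z}_s^0$ yields $[H_sF_d]_ir_{l-\tau}+\gamma_{l-\tau}[\tilde e_{h_1}^{\tau}]_i$. Your displayed inequality keeps only the second term; the paper's derivation carries $F_dr_{l-\tau}$, and the printed \eqref{Eqn:constr-b} has lost it. Second, the upper half of \eqref{EqnEqn:dis-set-adap-d} gives $B(\hat u_l+Kr_l+K_dr_{l-\tau})\le\bar u_l-\overline{e}_L$, not $\bar u_l+\overline{e}_L$ as in \eqref{Eqn:constr-g}. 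Third, in \eqref{Eqn:dis-set-adap-e} one has $\max_{e\in\mathcal{Z}_s^l}|[e]_i|=\max\{[r_l]_i+\gamma_l[\bar e]_i,\,-[r_l]_i-\gamma_l[\underline{e}]_i\}$. For symmetric $\mathcal{Z}_s^0$ this equals $|[r_l]_i|+\gamma_l[\tilde e]_i$, not $[r_l]_i+\gamma_l[\tilde e]_i$, and the stated $h_{4,l}$ drops the absolute value altogether. The exact linear form is the pair $\bar s_l\ge r_l+\gamma_l\bar e$, $\bar s_l\ge-r_l-\gamma_l\underline{e}$, and analogously for $\bar u_l$. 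Finally, \eqref{Eqn:adap-constr} uses one $\Lambda$ for all $l$, whereas strong duality only supplies a multiplier for each $l$ separately. For necessity, invoke the box structure. When $b_{w,l}=(\bar b_{w,l},\bar b_{w,l})$ with $\bar b_{w,l}\ge 0$, the choice $\lambda_i=\big(([H_s]_i^\top)_+,([H_s]_i^\top)_-\big)$ (entrywise positive and negative parts) is optimal for every $l$, with value $|[H_s]_i|\,\bar b_{w,l}$. This also makes $\Lambda b_{w,l}$ linear in the decision variables.
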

                    		\begin{algorithm}[h]
			\caption{Pseudocode of adaptive Koopman MPC}
			\label{alg:ra-KMPC}
			\textbf{Off-line designs:}
			\begin{algorithmic}[1]
				\State Select $\Psi(x)$ such that Assumptions~\ref{assum:invers-phi} and~\ref{assu:phi-ori} are verified.
				\State Compute $A$, $A_d$, $B$, $C$, and $D$ with~\eqref{Eqn:appro_K-delay}, and check that Assumption~\ref{assum:stabi-obser} is verified.
				\State Calculate $\bar{\mathcal{W}}$ and $\mathcal{V}$ according to~\cite{zhang2022robust}.
				\State Compute $K,\, K_d$, and  $\bar P$ with~\eqref{Eqn:LYA-mod} and calculate the robust positively invariant set $\mathcal{Z}_s$, $\mathcal{Z}_x$ with $K,\,K_d$ {\color{black}according to~\cite{mayne2005robust}}.
				\State Compute  $\mathcal{S}$, $\hat {\mathcal{U}}$; calculate the terminal set $\mathcal{S}_f$ with $K,\, K_d$.
				\State Calculate $L_v$, $L_s$, $L_{s,d}$, $L_u$, and $L_w$ according to~\cite{zhang2022robust}.
				\State Compute $\bar L_u$,  $\tilde e_{h_1}$, $\tilde e_{h_2}$, $\underline{e}$, $\bar{e}$, $\underline{e}_b$, $\overline{e}_b$, $\tilde e$, and $\tilde e_b$ to construct constraints~\eqref{Eqn:adap-constr}.
			\end{algorithmic}
			\dotfill \\
			\textbf{On-line procedures:}\\
			At each discrete-time step $k=1,2,\cdots$
			\begin{algorithmic}[1]
				\State Measure $x_k,\,x_{k-\tau}$ and set the lifted state $s_k=\Psi(x_k)$ and $s_{k-\tau}=\Psi(x_{k-\tau})$.
				\State  Solve~\eqref{Eqn:optimiz1-ad} with~\eqref{Eqn:cost-mod} and obtain ${\bm\hat u}^{\ast}_{k|k}$, $\hat s_{k-j|k}$, $j\in\mathbb{N}_0^{\tau}$.
				\State Set $u_k$ with~\eqref{Eqn:realu} and apply it to the nonlinear system~\eqref{Eqn:non-model}.
			\end{algorithmic}
		\end{algorithm}
		{\color{black}
			\textbf{Proof}. Constraint~\eqref{EQN:ROBUST-CON-A} is equivalent to \vspace{-2mm}
			\begin{equation*}
				\begin{array}{ll}
					H_e(s_{k-j}-\hat s_{k-j}-r_{k-j})\leq \gamma_{k-j}\bm 1,\, j\in\mathbb{N}_0^{\tau},
				\end{array}
			\end{equation*} that is~\eqref{Eqn:constr-a-delay}.
			Likewise,~\eqref{EQN:ROBUST-CON-B} is equivalent to 
			\begin{equation*}
				A s_{l}+A_ds_{l-\tau}+Bu_{l}+\bar w_{l}\in\bar{\mathcal{S}}_{\Psi},\ \forall \,\bar w_{l}\in\bar{\mathcal{W}}^l,\, s_{l},s_{l-\tau}\in \bar{\mathcal{S}}_{\Psi}.
			\end{equation*} 
			In view of $u=\hat u+K(s-\hat s)+K_d(s_{-\tau}-\hat s_{-\tau})$, $s=\hat s+e$, $s_{-\tau}=\hat s_{-\tau}+e_{-\tau}$, the above constraint is equivalent to\vspace{-2mm}
			\begin{equation*}
				\max_{e_l\in\mathcal{Z}_s^l,e_{l-\tau}\in\mathcal{Z}_s^{l-\tau}}H_s(\hat s_{l+1}+Fe_{l}+F_de_{l-\tau}+\bar w_{l})\leq \bm 1,\ \forall \,\bar w\in\bar{\mathcal{W}}^l,
			\end{equation*} 
			\vspace{-4mm}
			\begin{equation*}
				\Leftrightarrow\max_{\bar w\in\bar{\mathcal{W}^l}} H_s(\hat s_{l+1}+Fr_{l}+F_dr_{l-\tau}+\bar w_{l})\leq \bm 1-\gamma_{l}\tilde e_{h_1}-\gamma_{l-\tau}\tilde e_{h_1}^{\tau},
			\end{equation*} due to $e^j_{l}=r_{l}+\gamma_{l} e^j$ in view of~\eqref{Eqn:op-robust-set}. It is equivalent to\vspace{-2mm}
			\begin{equation*}
				\begin{array}{ll}
					\max_{ w\in{\mathcal{W}^l}} H_s(\hat s_{l+1}+Fr_l+F_dr_{l-\tau}+ w_{l})\leq\\ \hspace{35mm}\bm 1-\gamma_{l}\tilde e_{h_1}-\gamma_{l-\tau}\tilde e_{h_1}^{\tau}-\tilde w_{o},
				\end{array}
			\end{equation*}
			where by duality, for each $i\in\mathbb{N}_1^q$, $\max [H_s]_iw$ subject to $H_Iw\leq b_w$, is equivalent to \vspace{-2mm}
			\begin{equation*}
				\min{\lambda_i} \max_w [H_s]_iw+\sum_{j=1}^{2n_{\psi}}[\lambda_i]_j([b_w]_j-[H_I]_jw),
			\end{equation*}
            \vspace{-3mm}
			\begin{equation*}
				\begin{array}{ll}
					\Leftrightarrow & \min{\lambda_i} \sum_{j=1}^{2 n_{\psi}}[\lambda_i]_j[b_w]_j,\vspace{1mm}\\
					&\text{s.t.:}\quad [H_s]_i-\sum_{j=1}^{2n_{\psi}}[\lambda_i]_j[H_I]_j=0, \ [\lambda_i]_j\geq 0.
				\end{array}
			\end{equation*}
			Hence,~\eqref{EQN:ROBUST-CON-B} is equivalent to~\eqref{Eqn:constr-b} and~\eqref{Eqn:constr-c}. 
			\eqref{EQN:ROBUST-CON-C} is equivalent to $\max_{e\in\mathcal{Z}_s^l}H_s(\hat s_{l}+e_{l})\leq \bm 1$, that is~\eqref{Eqn:constr-d}. Similarly, \eqref{EQN:ROBUST-CON-D} is equivalent to
			$\max_{e\in{\mathcal{Z}_s^l}}G_u(\hat u_{l}+Ke_{l}+K_de_{l-\tau})\leq\bm 1$, i.e., constraint~\eqref{Eqn:constr-e}. Also, \eqref{EqnEqn:dis-set-adap-c} is equivalent to $-\bar s_{l} \leq \hat s_{l}+r_{l}+\gamma_{l}e\leq \bar s_{l},\, \forall e\in\mathcal{Z}_s^0$, and to~\eqref{Eqn:constr-f}  via maximize (minimize) over $e\in\mathcal{Z}_s^0$ to the right (left) hand side of the inequality. Likewise,~\eqref{EqnEqn:dis-set-adap-d} is equivalent to $-\bar u_{l} \leq B (\hat u_{l}+Kr_{l}+K\gamma_{l}e+K_dr_{l-\tau}+K_d\gamma_{l-\tau}e_{-\tau})\leq  \bar u_{l},\, \forall e\in\mathcal{Z}_s^0$, and to~\eqref{Eqn:constr-g}. \eqref{Eqn:dis-set-adap-e} is equivalent to~\eqref{Eqn:dis-set-adap-e-final} in view of~\eqref{Eqn:op-robust-set}.}
		\hfill$\blacksquare$
		
		\subsection{Robust Koopman MPC with sets updates}\label{sec:r-KMPC-updata}
		The proposed controller here is similar to~\eqref{Eqn:real_u}. The main difference lies in a new formulation of the nominal MPC~\eqref{Eqn:optimiz1} with constraints updates.
		At any time instant $k$, an online optimization problem is to be solved:
		{\color{black}
			\begin{equation}\label{Eqn:optimiz1-ad}
				\min{\{\hat s_{k-j}\}_{j=0}^{\tau},{\bm {\hat u}}_k,\bm r_k,\bm \gamma_k,\bm{\bar s}_k,\bm{\bar u}_k, \Lambda}  { \bar V\big(\hat{s}_{k},\bm {\hat u}_k,\bm{\bar s}_k,\bm{\bar u}_k\big)},
			\end{equation}
			where
			\begin{equation}\label{Eqn:cost-mod}
				\begin{array}{ll}
					\bar V=\bar V_f(\hat{s}_{k+N})+\sum_{i=0}^{N-1}(\|\hat {x}_{k+i}\|_{ Q}^2+\|\hat u_{k+i}\|_{R}^2+\vspace{1mm}\\
					q_s\|\bar s_{k+i}-h_{3,k+i}\|^2
					+q_u\|\bar u_{k+i}-h_{4,k+i}\|^2),
					
				\end{array}
			\end{equation}
			and $q_s>0$ and $q_u>0$ are tuning scalars, $\bar V_f(\hat s)$ is defined as
			$\bar V_f(\hat{s})=\|\hat{s}\|^2_{\bar P}+\sum_{i=1}^{\tau}\|\hat{s}_{-i}\|_{\bar S}^2,$
			where the symmetric positive-definite matrices $\bar P$ and $\bar S$ satisfy the following inequality:
			\begin{equation}\label{Eqn:LYA-mod}
				\begin{bmatrix}\bar{\Pi}_1(\bar P,\bar S)&\star\\
					F_d^{\top}\bar PF+K_d^{\top}(R+q_uB^{\top}B)K&\bar{\Pi}_2(\bar P,\bar S)
				\end{bmatrix}\preceq 0,
			\end{equation}
			where $\bar{\Pi}_1(\bar P,\bar S)={\Pi}_1(\bar P,\bar S)+q_s+q_uK^{\top}B^{\top}BK$, $\bar{\Pi}_2(\bar P,\bar S)\break={\Pi}_2(\bar P,\bar S)+q_uK_d^{\top}B^{\top}BK_d$.}
		
		The above optimization problem is performed 
		subject to the following constraints:\vspace{-2mm}
		\begin{enumerate}[1)]
			\item The nominal Koopman model~\eqref{Eqn:unpert};
			\item The constraint~\eqref{Eqn:adap-constr};
			\begin{subequations}
				\item The terminal state constraint 
				\begin{align}
					\hat {s}_{k+N-j}\in {{\mathcal{S}}_f},\,\forall j\in\mathbb{N}_0^{\tau}.
				\end{align}
			\end{subequations}
		\end{enumerate} 
		Problem~\eqref{Eqn:optimiz1-ad} is repeatedly solved according to the receding horizon principle. The final control applied to system~\eqref{Eqn:non-model} is given as
		\begin{equation}\label{Eqn:realu}
			u_{k}=\hat u^o_{k|k}+K({s}_{k}-\hat{s}_{k|k})+K_d({s}_{k-\tau}-\hat{s}_{k-\tau|k}).
		\end{equation}
		\begin{theorem}[Recursive feasibility]\label{the:the-recur-dap-mpc} Under Assumption \ref{assu:phi-ori}, if problem~\eqref{Eqn:optimiz1-ad} with~\eqref{Eqn:cost-mod} is feasible at time $k=0$, then it is recursively feasible for all $k\geq 1$.
		\end{theorem}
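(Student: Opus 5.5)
We argue by induction on $k$ with a shifted candidate solution, as in Claim (i) of Theorem~\ref{the:rmpc-robust}, but now the tube parameters $\bm r_k,\bm\gamma_k$, the bounds $\bm{\bar s}_k,\bm{\bar u}_k$ and the multiplier $\Lambda$ must be shifted too. Suppose that at time $k$ problem~\eqref{Eqn:optimiz1-ad} has an optimal solution $\{\hat s_{k-j|k}\}_{j=0}^{\tau}$, $\bm{\hat u}^{\ast}_{k|k}$, $\bm r^{\ast}_{k}$, $\bm\gamma^{\ast}_k$, $\bm{\bar s}^{\ast}_k$, $\bm{\bar u}^{\ast}_k$, $\Lambda^{\ast}$ satisfying~\eqref{Eqn:adap-constr} for all $l\in[k,k+N]$ and the terminal constraint. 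At time $k+1$ we propose the following candidate.

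\begin{itemize}
\item[(a)] Keep the initial nominal states, $\hat s_{k+1-j|k+1}=\hat s_{k+1-j|k}$ for $j\in\mathbb{N}_0^{\tau}$.
\item[(b)] Shift the input, $\bm{\hat u}^s_{k+1}=(\hat u^{\ast}_{k+1:k+N-1|k},\,K\hat s_{k+N|k}+K_d\hat s_{k+N-\tau|k})$.
\item[(c)] Shift the tube parameters, $r_{l|k+1}=r^{\ast}_{l|k}$ and $\gamma_{l|k+1}=\gamma^{\ast}_{l|k}$ for $l\le k+N$, and likewise $\bar s$, $\bar u$ for $l\le k+N-1$. Keep the same $\Lambda^{\ast}$, since~\eqref{Eqn:constr-c} does not depend on $l$.
\end{itemize}

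For the new tail index $l=k+N$ we set $r_{k+N+1}=Fr_{k+N}+F_dr_{k+N-\tau}$ and take $\gamma_{k+N+1}$ as large as the invariance inclusion~\eqref{EQN:ROBUST-CON-B} requires. We then choose $\bar s_{k+N}=h_{3,k+N}$ and $\bar u_{k+N}=h_{4,k+N}$, so that~\eqref{Eqn:dis-set-adap-e-final} holds with equality and the new $b_{w,k+N}$ is determined.

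Next we check~\eqref{Eqn:adap-constr} item by item. For $l\in[k+1,k+N-1]$, constraints \eqref{Eqn:constr-b}--\eqref{Eqn:dis-set-adap-e-final} are exactly the constraints already verified at time $k$, because the shifted variables coincide with the old ones. Constraint~\eqref{Eqn:constr-a-delay} at $k+1$ is the key point. Here $e_{k+1}=Fe_k+F_de_{k-\tau}+\bar w_k$ by~\eqref{Eqn:e}, with $e_k\in\mathcal{Z}_s^k$ and $e_{k-\tau}\in\mathcal{Z}_s^{k-\tau}$ (from~\eqref{Eqn:constr-a-delay} at time $k$). Moreover $\bar w_k\in\bar{\mathcal{W}}^k$, since by~\eqref{Eqn:dis-set} and~\eqref{Eqn:constr-f}--\eqref{Eqn:constr-g} the true $s_k$ and $Bu_k$ lie inside the boxes $\pm\bar s_k$ and $\pm\bar u_k$. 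Then~\eqref{EQN:ROBUST-CON-B}, which the Proposition on online sets updates shows is equivalent to~\eqref{Eqn:constr-b}--\eqref{Eqn:constr-c}, gives $e_{k+1}\in\mathcal{Z}_s^{k+1}$. The delayed errors $e_{k+1-j}$ for $j\ge 1$ were already certified at earlier steps, so~\eqref{Eqn:constr-a-delay} holds with the same $r,\gamma$.

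The main obstacle is the tail step $l=k+N$, i.e.~\eqref{Eqn:constr-b},~\eqref{Eqn:constr-d} and~\eqref{Eqn:constr-e} at the new terminal index. For this we rely on the terminal set: $\hat s_{k+N-j|k}\in\mathcal{S}_f$ and $\bar F\mathcal{S}_f\subseteq\mathcal{S}_f$, so the next nominal state under $K,K_d$ stays in $\mathcal{S}_f$, which gives the terminal constraint at $k+1$. To absorb the tube we must assume, and if necessary build into the offline design of $\mathcal{S}_f$, that $\mathcal{S}_f$ is constructed with respect to the largest admissible tube. That is, $\mathcal{S}_f\oplus\mathcal{Z}_s\subseteq\bar{\mathcal{S}}_{\Psi}$ and $(K+K_d)$ maps $\mathcal{S}_f\oplus\mathcal{Z}_s$ into $\mathcal{U}$, with $\mathcal{Z}_s$ the precomputed RPI set (the worst-case bound from Lemma~\ref{lemma:sets}). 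The tail tube can then be taken as $r=0$, $\gamma=1$, i.e. $\mathcal{Z}_s^{k+N}=\mathcal{Z}_s^0$, for which~\eqref{EQN:ROBUST-CON-B} holds by the RPI property.

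Assumption~\ref{assu:phi-ori} ensures $\Psi(0)=0$, so the origin is an equilibrium of~\eqref{Eqn:unpert} with zero uncertainty bounds, and $\mathcal{S}_f$ and the box constraints are consistent near it. Finally, the new multiplier constraint~\eqref{Eqn:constr-c} is unchanged, and $\lambda_i\ge 0$ continues to certify~\eqref{Eqn:constr-b} with the new $b_{w,k+N}$ by the weak-duality argument used in the Proposition on online sets updates. Hence the candidate is feasible at $k+1$, and induction gives recursive feasibility for all $k\ge1$.
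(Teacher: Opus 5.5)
\textbf{The gap is in the tail box variables.} Your overall structure matches the paper's: shifted candidate, terminal feedback $K\hat s_{k+N|k}+K_d\hat s_{k+N-\tau|k}$, unchanged $\Lambda^{\ast}$, and invariance of $\mathcal{S}_f$. However, the tail choice $\bar s_{k+N}=h_{3,k+N}$, $\bar u_{k+N}=h_{4,k+N}$ makes the candidate infeasible.

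The box constraints \eqref{EqnEqn:dis-set-adap-c}--\eqref{EqnEqn:dis-set-adap-d} must bound the \emph{whole} lifted state $s_{k+N}=\hat s_{k+N|k}+e_{k+N}$ and the whole $Bu_{k+N}=B\hat u_{k+N}+B(Ke_{k+N}+K_de_{k+N-\tau})$. By contrast, $h_{3,k+N}$ and $h_{4,k+N}$ bound only the error parts. Concretely, substituting $\bar s_{k+N}=r_{k+N}+\gamma_{k+N}\tilde e$ into the upper half of \eqref{Eqn:constr-f} gives $\hat s_{k+N|k}\le\gamma_{k+N}(\tilde e-\bar e)$. For a symmetric $\mathcal{Z}_s^0$ (where $\bar e=\tilde e$) this forces $\hat s_{k+N|k}\le 0$ componentwise. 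The same happens in \eqref{Eqn:constr-g} with the nominal term $B\hat u_{k+N}=BK\hat s_{k+N|k}+BK_d\hat s_{k+N-\tau|k}$. So your candidate fails unless the nominal terminal state essentially vanishes. Your own step for \eqref{Eqn:constr-a-delay} even uses that the boxes contain the true $s_k$ and $Bu_k$, which is incompatible with boxes of size $h_3,h_4$.

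The paper's candidate \eqref{Eqn:feasible-solu} adds the nominal part:
$\bar s_{k+N}=\bar{\hat s}_{k+N|k}+h_{3,k+N|k}$ and $\bar u_{k+N}=\bar{\hat u}_{k+N|k}+h_{4,k+N|k}$, where $[\bar{\hat s}]_i=|[\hat s_{k+N|k}]_i|$ and $[\bar{\hat u}]_i=|[BK]_i\hat s_{k+N|k}|+|[BK_d]_i\hat s_{k+N-\tau|k}|$.
With this choice \eqref{Eqn:dis-set-adap-e-final} holds with slack rather than with equality. It is also the choice the terminal weight \eqref{Eqn:LYA-mod} is built for: the extra stage costs $q_s\|\bar{\hat s}_{k+N}\|^2+q_u\|\bar{\hat u}_{k+N}\|^2$ it generates are what \eqref{Eqn:LYA-mod} absorbs in the subsequent robustness theorem.

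\textbf{The tail tube is specified inconsistently.} You first append $r_{k+N+1}=Fr_{k+N}+F_dr_{k+N-\tau}$ with $\gamma_{k+N+1}$ ``as large as required''. That gives no bound on $\gamma_{k+N+1}$, and hence no way to verify the tightened constraint \eqref{Eqn:constr-d} at the new index. Later you take $r=0$, $\gamma=1$ for $\mathcal{Z}_s^{k+N}$, which contradicts your shift (c), since that already fixes $r_{k+N},\gamma_{k+N}$.

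You should commit to the paper's choice. Append $r_{k+N+1}=0$, $\gamma_{k+N+1}=1$, i.e.\ $\mathcal{Z}_s^{k+N+1}=\mathcal{Z}_s^0$. Combined with $\hat s_{k+N+1|k+1}\in\mathcal{S}_f$, this gives $s_{k+N+1}\in\mathcal{S}_f\oplus\mathcal{Z}_s$. Your explicit requirement that $\mathcal{S}_f\oplus\mathcal{Z}_s$ respect the original state and input constraints is what the paper uses implicitly at that point.

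\textbf{The RPI claim needs an extra containment.} Your claim that $F\mathcal{Z}_s^{k+N}\oplus F_d\mathcal{Z}_s^{k+N-\tau}\oplus\bar{\mathcal{W}}^{k+N}\subseteq\mathcal{Z}_s^0$ ``holds by the RPI property'' requires the inherited tubes and the updated set $\bar{\mathcal{W}}^{k+N}$ to be no larger than $\mathcal{Z}_s^0$ and $\bar{\mathcal{W}}$. Nothing in \eqref{Eqn:adap-constr} enforces this. The paper passes over the same point with ``by inheritance'', so a fully rigorous argument would add such a containment as an explicit constraint or assumption.
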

		\textbf{Proof}. We assume~\eqref{Eqn:optimiz1-ad} with~\eqref{Eqn:cost-mod} is feasible at time $k$, i.e., one can find the optimal solution, $\hat s^{\ast}_{k-j|k}$, ${\bm {\hat u}}^{\ast}_{\scriptscriptstyle k|k}$, $\bm r^{\ast}_{\scriptscriptstyle k|k}$, $\bm \gamma^{\ast}_{\scriptscriptstyle k|k}$, $\bm{\bar s}^{\ast}_{\scriptscriptstyle k|k}$, $\bm{\bar u}^{\ast}_{\scriptscriptstyle k|k}$, $\Lambda^{\ast}$, $j\in\mathbb{N}_0^{\tau}$, such that~\eqref{Eqn:dis-set-adap} and~\eqref{Eqn:robust-constrant} are satisfied. Let in the subsequent time instant a candidate solution be\vspace{-3mm}
		\begin{equation}\label{Eqn:feasible-solu0}
			\begin{array}{ll}
				\Lambda^{s}=\Lambda^{\ast},\ \hat s^s_{k+1-j}=\hat s_{k+1-j|k},\, j\in\mathbb{N}_0^{\tau}\\
				{\bm {\hat u}}^{s}_{\scriptscriptstyle k+1}={\bm{\hat u}}^{\ast}_{\scriptscriptstyle k+1:k+N-1|k}, K\hat s_{k+N|k}+K_d\hat s_{k+N-\tau|k}\\
				\bm r^{s}_{\scriptscriptstyle k+1}={\bm{r}}^{\ast}_{\scriptscriptstyle k+1:k+N|k},0\\
				\bm \gamma^{s}_{\scriptscriptstyle k+1}={\bm{\gamma}}^{\ast}_{\scriptscriptstyle k+1:k+N|k},1,\vspace{-2mm}
			\end{array}
		\end{equation}
		and\vspace{-2mm}
		\begin{equation}\label{Eqn:feasible-solu}
			\begin{array}{ll}
				\bm {\bar s}^s_{\scriptscriptstyle k+1}=\bm {\bar s}^{\ast}_{\scriptscriptstyle k+1:k+N-1|k},\bar s^t,\,
				\bm {\bar u}^s_{\scriptscriptstyle k+1}=\bm {\bar u}^{\ast}_{\scriptscriptstyle k+1:k+N-1|k},\bar u^t,
			\end{array}
		\end{equation}
		where 
			$\bar s^t=\bar{\hat s}_{k+N|k}+h_{3,k+N|k},\,
			\bar u^t=\bar{\hat u}_{k+N|k}+h_{4,k+N|k},$
		and $[\bar{\hat s}]_i=|[{\hat s}]_i|$, $i=1,\cdots, n_{\psi}$, $[\bar{\hat u}]_i=|[BK]_i\hat s|+|[BK_d]_i\hat s_{-\tau}|$, $i=1,\cdots, m$.
		One can conclude by inheritance that constraint~\eqref{Eqn:dis-set-adap} is satisfied and\vspace{-2mm}
		\begin{equation*}
			\begin{array}{ll}
				&{s}_{k+1-j}-\hat{s}_{k+1-j|k+1}\in \mathcal{Z}_s^k,\,j\in\mathbb{N}_0^{\tau}\\
				&F\mathcal{Z}_s^{\scriptscriptstyle l|k+1}\oplus F_d\mathcal{Z}_s^{\scriptscriptstyle l-\tau|k+1}\oplus\bar{\mathcal{W}}^{\scriptscriptstyle l|k+1}\subseteq \mathcal{Z}_s^{\scriptscriptstyle l|k+1}\\
				&x_{l|k+1}\in\mathcal{X},\, u_{l|k+1}\in\mathcal{U},\vspace{-1mm}
			\end{array}
		\end{equation*}	
		for all $ l=k+1,\cdots,k+N$.  Moreover, 
		$\hat {s}_{k+N+1-j|k+1}\in {{\mathcal{S}}_f}$, $j\in\mathbb{N}_0^{\tau}$, by the choice of $ K\hat s_{k+N|k}+K_d\hat s_{k+N-\tau|k}$ in $\bm u^s$, $\mathcal{Z}_s^{\scriptscriptstyle k+N+1|k+1}=\mathcal{Z}_s^0$ by $\bm r^{s}$ and $\bm \gamma^{s}$. Hence, one promptly has $s_{k+N+1|k+1}\in{\mathcal{S}}_f\oplus\mathcal{Z}_s$, leading to the satisfactions of $x_{k+N+1}\in\mathcal{X}$, $u_{k+N}\in\mathcal{U}$. Also~\eqref{EqnEqn:dis-set-adap-c},~\eqref{EqnEqn:dis-set-adap-d}, and \eqref{Eqn:dis-set-adap-e} are fulfilled at the terminal time instant by~\eqref{Eqn:feasible-solu}.   Hence, the recursive feasibility is verified.
		\hfill $\blacksquare$\vspace{-1mm}
		\begin{theorem}[Closed-loop robustness] Under Assumptions~\ref{assum:invers-phi} and \ref{assu:phi-ori}, the lifted nominal system~\eqref{Eqn:unpert} using $\hat s_{k},\hat u_{k}$ computed with~\eqref{Eqn:optimiz1-ad} is asymptotically stable.
			Also, the states of model~\eqref{Eqn:linear_p-residual} and nonlinear system~\eqref{Eqn:non-model} with control~\eqref{Eqn:real_u} converge asymptotically to the sets ${\mathcal{Z}}_s^k$ and ${\mathcal{Z}}_x^k=C{\mathcal{Z}}_s^k\oplus\mathcal{V}^k$, i.e.,
			$s_{k}\rightarrow{\mathcal{Z}}_s^k,\ x_{k}\rightarrow {\mathcal{Z}}_x^k\ \text{as}\ k\rightarrow+\infty.$
		\end{theorem}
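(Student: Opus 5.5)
We mirror the proof of Theorem~\ref{the:rmpc-robust}, with the optimal cost $\bar V^{\ast}_k$ of problem~\eqref{Eqn:optimiz1-ad} as a Lyapunov-like function, and take recursive feasibility from Theorem~\ref{the:the-recur-dap-mpc}.

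\emph{Cost decrease.} At time $k+1$ we use the shifted candidate~\eqref{Eqn:feasible-solu0}--\eqref{Eqn:feasible-solu}, with cost $\bar V^{s}_{k+1}\geq \bar V^{\ast}_{k+1}$. The candidate keeps every stage term of the optimal sequence at time $k$ on $[k+1,k+N-1]$, so
\begin{equation*}
\bar V^{\ast}_{k+1}-\bar V^{\ast}_k\leq -\|\hat x_k\|_Q^2-\|\hat u_k\|_R^2-q_s\|\bar s_k-h_{3,k}\|^2-q_u\|\bar u_k-h_{4,k}\|^2+\Delta_N,
\end{equation*}
where $\Delta_N$ collects $\bar V_f(\hat s_{k+N+1})-\bar V_f(\hat s_{k+N})$ and the new terminal stage cost. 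That stage cost contains $\|\hat x_{k+N}\|_Q^2$, $\|\hat u_{k+N}\|_R^2$ with $\hat u_{k+N}=K\hat s_{k+N}+K_d\hat s_{k+N-\tau}$, and the two penalty terms. The choice $\bar s^t=\bar{\hat s}_{k+N}+h_{3,k+N}$ gives $\|\bar s^t-h_{3,k+N}\|^2=\|\hat s_{k+N}\|^2$. The choice $\bar u^t$ gives a penalty bounded by $\|BK\hat s_{k+N}\|^2+\|BK_d\hat s_{k+N-\tau}\|^2$, up to the cross term. By the construction of~\eqref{Eqn:LYA-mod}, which adds exactly $q_s I$, $q_uK^{\top}B^{\top}BK$, $q_uK_d^{\top}B^{\top}BK_d$ and the coupling term $q_uK_d^{\top}B^{\top}BK$ to $\Pi(P,S)$, we get $\Delta_N\leq\|(\hat s_{k+N},\hat s_{k+N-\tau})\|^2_{\bar\Pi}\leq 0$.

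\emph{Main obstacle.} The delicate step is verifying that the $\ell_1$-type bound $\bar u^t$, built from absolute values, is dominated by the quadratic form in~\eqref{Eqn:LYA-mod}. First I would write $\bar u^t-h_{4,k+N}$ componentwise as $|a_i|+|b_i|$ with $a=BK\hat s$ and $b=BK_d\hat s_{-\tau}$. Then $(|a_i|+|b_i|)^2=a_i^2+b_i^2+2|a_ib_i|$. The matrix inequality accounts only for the signed cross term, so I would absorb the gap $2|a_ib_i|-2a_ib_i$ by Young's inequality, or, if needed, assume a slightly strengthened version of~\eqref{Eqn:LYA-mod}. As in Theorem~\ref{the:rmpc-robust}, $\bar V_f$ then satisfies~\eqref{EQN:LY-K} with $\beta=\lambda_{\max}(\bar P)+\tau\lambda_{\max}(\bar S)$, so it is a Lyapunov--Krasovskii functional.

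\emph{Nominal convergence.} Since $\bar V^{\ast}_k\geq 0$ is nonincreasing and its decrements are bounded below by $\|\hat x_k\|_Q^2+\|\hat u_k\|_R^2$, summing shows these terms are summable. Hence $\hat x_k,\hat u_k\to 0$, and $\hat s_k\to 0$ follows as in Theorem~\ref{the:rmpc-robust} via Assumption~\ref{assu:phi-ori}. Asymptotic stability in the Lyapunov sense comes from Lemma~6.1 of~\cite{Fridman2014TimeDelay}, using upper bounds on $\bar V^{\ast}$ near the origin inherited from $\bar V_f$ on $\mathcal{S}_f$.

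\emph{Set convergence.} By~\eqref{EQN:ROBUST-CON-A} and the invariance constraint~\eqref{EQN:ROBUST-CON-B}, which are preserved along the closed loop by feasibility, we have $e_k=s_k-\hat s_{k|k}\in\mathcal{Z}_s^k$. Together with $x_k=Cs_k+v_k$ and $v_k\in\mathcal{V}^k$, this gives $x_k-\hat x_k\in C\mathcal{Z}_s^k\oplus\mathcal{V}^k=\mathcal{Z}_x^k$. Since $\hat s_k,\hat x_k\to 0$, the distances of $s_k$ to $\mathcal{Z}_s^k$ and of $x_k$ to $\mathcal{Z}_x^k$ tend to zero, which is the claim.
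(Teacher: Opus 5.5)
Your outline is the paper's own route: the shifted candidate of Theorem~\ref{the:the-recur-dap-mpc}, a one-step decrease of $\bar V^{\ast}$ obtained from~\eqref{Eqn:LYA-mod}, and then $e_k\in\mathcal{Z}_s^k$ from~\eqref{EQN:ROBUST-CON-A} combined with $\hat s_k,\hat x_k\to 0$. The step you single out is exactly where the argument breaks, and the paper does not handle it either. It simply asserts $\|\bar{\hat u}_{k+N}\|^2=\|BK\hat s_{k+N}+BK_d\hat s_{k+N-\tau}\|^2$. With $a=BK\hat s_{k+N}$ and $b=BK_d\hat s_{k+N-\tau}$, one only has
\[
\|\bar{\hat u}_{k+N}\|^2=\|a+b\|^2+2\sum_i\left(|a_ib_i|-a_ib_i\right)\geq\|a+b\|^2 ,
\]
while~\eqref{Eqn:LYA-mod} accounts only for $q_u\|a+b\|^2$.

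Your patch does not close this gap. Young's inequality only trades $2|a_ib_i|$ for further nonnegative multiples of $a_i^2$ and $b_i^2$, and there is nothing to absorb them into. Condition~\eqref{Eqn:LYA-mod} is merely $\preceq 0$ and may be tight along some direction of $(\hat s_{k+N},\hat s_{k+N-\tau})$. The negative stage terms at time $k$ involve different variables. Your fallback does work, e.g.\ doubling the diagonal $q_u$ blocks and dropping the $q_u$ coupling, using $(|a_i|+|b_i|)^2\leq 2a_i^2+2b_i^2$. But it proves the theorem under a hypothesis different from the stated one.

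The missing observation is that only feasibility of the candidate matters for $\bar V^{\ast}_{k+1}\leq\bar V^{s}_{k+1}$, so you may choose a tighter terminal entry: $[\bar u^t]_i=|[B\hat u_{k+N}]_i|+[h_{4,k+N}]_i$ with $\hat u_{k+N}=K\hat s_{k+N}+K_d\hat s_{k+N-\tau}$. This choice is feasible:
\begin{itemize}
\item It satisfies $\bar u^t\geq h_{4,k+N}$ trivially.
\item The box constraint~\eqref{EqnEqn:dis-set-adap-d} holds by the triangle inequality, since by its definition in~\eqref{Eqn:dis-set-adap-e}, $h_{4,k+N}$ bounds $|[BKe+BK_de_{-\tau}]_i|$ over the tube.
\item It is componentwise no larger than the choice in~\eqref{Eqn:feasible-solu}, so it only shrinks $b_{w,k+N}$ and hence $\mathcal{W}^{k+N}$. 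That set remains a valid bound because the box constraint still holds, so~\eqref{Eqn:constr-b} (with $\Lambda\geq 0$) remains satisfied.
\end{itemize}
Its penalty is exactly $q_u\|a+b\|^2$, and~\eqref{Eqn:LYA-mod} then yields $\Delta_N\leq 0$ with no extra assumption. The rest of your decrease computation, in particular $\|\bar s^t-h_{3,k+N}\|^2=\|\hat s_{k+N}\|^2$, is correct.

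Separately, your step ``$\hat s_k\to 0$ via Assumption~\ref{assu:phi-ori}'' is not justified. You copied it from the proof of Theorem~\ref{the:rmpc-robust}, where the paper makes the same leap. Here $\hat s_k$ is a free decision variable not confined to $\mathcal{S}_{\Psi}$, and $\hat x_k=C\hat s_k\to 0$ only drives $\hat s_k$ toward $\ker C\neq\{0\}$. For the same reason, the decrease you obtain is not of the form~\eqref{eqn:LK-2} in $\hat s_k$. Since the set-convergence claim rests on $\hat s_k\to 0$, you need either an additional detectability argument (cf.\ Proposition~\ref{pro:sta-obser}) or a positive definite weight on $\hat s$ there.
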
\vspace{-1mm}
		{\color{black}
			\textbf{Proof}. Inline with the arguments in Theorem 2, letting $\bar V_k$ be the sub-optimal cost under~\eqref{Eqn:feasible-solu0} and~\eqref{Eqn:feasible-solu}, one has \vspace{-1mm}
			\begin{equation*}
				\begin{array}{lll}
					&\bar V^{\ast}_{k+1}-\bar V^{\ast}_{k}\leq\bar V^{s}_{k+1}-\bar V^{\ast}_{k}\\
					&\leq\hspace{-1mm}-\|\hat {x}_{k}\|_{Q}^2-\|\hat u_{k}\|_{R}^2\vspace{1mm}-
					q_s\|\bar s_{k}-h_{3,k}\|^2-q_u\|\bar u_{k}-h_{4,k}\|^2\\
					&\hspace{2mm}+\bar V_f(\hat{s}_{k+N+1})-\bar V_f(\hat{s}_{k+N})+q_s\|\bar{\hat s}_{k+N}\|^2+q_u\|\bar{\hat u}_{k+N}\|^2.\vspace{-1mm}
				\end{array}
			\end{equation*}
			Taking into account that $\|\bar{\hat u}_{k+N}\|^2=\|BK{\hat s}_{k+N}+BK_d{\hat s}_{k+N-\tau}\|^2,$ 	in view of~\eqref{Eqn:LYA-mod}, one has
					$\bar V_f(\hat{s}_{k+N+1})-\bar V_f(\hat{s}_{k+N})+q_s\|{\hat s}_{k+N}\|^2+
					q_u\|BK{\hat s}_{k+N}+BK_d{\hat s}_{k+N-\tau}\|^2\leq 0.$
			Hence, $\bar V^{\ast}_{k+1}-\bar V^{\ast}_{k}\leq-\|\hat {x}_{k}\|_{Q}^2-\|\hat u_{k}\|_{R}^2-q_s\|\bar s_{k}-h_{3,k}\|^2-q_u\|\bar u_{k}-h_{4,k}\|^2,$ 
			leading to the asymptotic stability of \eqref{Eqn:unpert}, i.e., $\hat s_{k}$, $\hat x_{k}$, and $\hat u_{k}$ to the origin, and of variables $\bar s_{k}$, $\bar u_{k}$ to $h_{3,k}$, $h_{4,k}$ respectively, see Theorem 2.}\\
		As for the closed-loop robustness, one can conclude inline with Theorem 2 that $s_{k}\rightarrow{\mathcal{Z}}_s^k,\ x_{k}\rightarrow {\mathcal{Z}}_x^k$ asymptotically. 
		\hfill $\blacksquare$
       {\color{black} \begin{remark}
        Note that, the results $\bar s_{k+i}\rightarrow h_{3,k+i},
\,
\bar u_{k+i}\rightarrow h_{4,k+i},$ as $k\rightarrow +\infty$ imply that the RPI set can be automatically reduced to a relatively small region. However, this region may not be unique due to the online time-varying uncertainty adaptation, and it may not be minimal because of the conservatism introduced by constraint~\eqref{Eqn:dis-set-adap-e}.
        \end{remark}}
		Let $\tilde{\bar \Pi}(\bar P,\bar S)$ be defined in line with $\tilde{\Pi}( P, S)$ in~\eqref{Eqn:iss-con}. The following corollary is stated.
		\begin{corollary}[Point-wise convergence]
			If $w_o=0$ and $\bar{\tilde \Pi}(\bar P,\bar S)\preceq 0$ is satisfied,
			the closed-loop lifted Koopman model~\eqref{Eqn:linear_p-residual} and nonlinear one~\eqref{Eqn:non-model} with control~\eqref{Eqn:real_u} under~\eqref{Eqn:optimiz1-ad} converge to the origin asymptotically, i.e.,
			$$x_{k}\rightarrow 0,\ \ u_{k}\rightarrow 0,\   \text{and}\ s_{k}\rightarrow 0\ \text{as}\ k\rightarrow+\infty.$$
		\end{corollary}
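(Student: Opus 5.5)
The plan is to mirror the proof of Theorem~\ref{the:rmpc-convergence}, with $(P,S)$ replaced by $(\bar P,\bar S)$. The nominal part is supplied by the preceding closed-loop robustness theorem for the adaptive scheme. Problem~\eqref{Eqn:optimiz1-ad} is recursively feasible by Theorem~\ref{the:the-recur-dap-mpc}, and its optimal cost $\bar V^{\ast}_k$ is nonincreasing with decrement at least $\|\hat x_k\|_Q^2+\|\hat u_k\|_R^2$. Hence $\hat s_k,\hat x_k,\hat u_k\to 0$, and also $\bar s_k-h_{3,k}\to 0$ and $\bar u_k-h_{4,k}\to 0$. It therefore remains to show that the real lifted state $s_k$ also converges to the origin, and not merely to the tube $\mathcal{Z}_s^k$.

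First I would set $w_o=0$. Since $D\hat w_o$ and the $L_w$ term then vanish, \eqref{Eqn:dis-set} yields $\|\bar w_k\|\le \|s_k\|_{\bar L_s}+\|s_{k-\tau}\|_{\bar L_{s,d}}$, exactly as in the bound preceding Theorem~\ref{the:rmpc-convergence}. Substituting the control~\eqref{Eqn:realu} into~\eqref{Eqn:linear_p-residual} gives
$s_{k+1}=Fs_k+F_ds_{k-\tau}+B\big(\hat u_k-K\hat s_k-K_d\hat s_{k-\tau}\big)+\bar w_k$.
This is the autonomous system~\eqref{Eqn:linear_p-residual-no-dis} perturbed by a nominal signal $\eta_k:=B(\hat u_k-K\hat s_k-K_d\hat s_{k-\tau})$, which vanishes asymptotically by the previous paragraph.

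Next I would take the Lyapunov--Krasovskii functional $\bar V_{P,k}=\|s_k\|_{\bar P}^2+\sum_{i=1}^{\tau}\|s_{k-i}\|_{\bar S}^2$. Expanding $\Delta\bar V_P(k)$ as in the proof of Theorem~\ref{the:rmpc-convergence}, and using the S-procedure with multiplier $\gamma$ on the quadratic constraint $\|\bar w_k\|^2\le\ldots$, the condition $\tilde{\bar\Pi}(\bar P,\bar S)\preceq 0$ should give
\[\Delta \bar V_P(k)\le -\|s_k\|^2_{\bar Q}+\sigma(\|\eta_k\|).\]
Here the negative term comes from the $\bar Q$ and $q_s$ contributions inside $\bar\Pi_1$, which dominate $\bar Q$ because $\bar\Pi_1$ contains $\Pi_1$ plus positive terms. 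The term $\sigma$ collects the cross terms in $\eta_k$ and is bounded by Young's inequality as $\epsilon\|s_k\|^2+c_\epsilon\|\eta_k\|^2$ plus the delayed analogue.

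Since $\eta_k\to 0$, this is an ISS-type Lyapunov--Krasovskii inequality driven by a vanishing input. The standard discrete-time argument then gives $s_k\to 0$: the functional is bounded via~\eqref{eqn:LK-1}, and the decrease holds outside a ball whose radius shrinks with $\sup_{t\ge k}\|\eta_t\|$. From $s_k\to0$ and $u_k=\hat u_k+K(s_k-\hat s_k)+K_d(s_{k-\tau}-\hat s_{k-\tau})$ I obtain $u_k\to 0$. Finally, $x_k=\Psi^{-1}(s_k)\to 0$ by the Lipschitz continuity of $\Psi^{-1}$ and $\Psi(0)=0$ (Assumption~\ref{assu:phi-ori}), or equivalently through $\|v\|\le L_v\|s\|$.

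I expect the main obstacle to be extracting a strict decrease from $\tilde{\bar\Pi}\preceq0$, which is only nonstrict. I would first try to use the fact that $\bar\Pi_1$ includes $\bar Q+q_sI$ with $q_s>0$, so that $\|s_k\|^2_{q_sI}$ furnishes the strict margin $-\alpha\|s_k\|^2$ required in~\eqref{eqn:LK-2}; this margin would also absorb the $\epsilon$-terms arising from $\eta_k$. The bookkeeping for the delayed cross term $F_d s_{k-\tau}$ against $\eta_k$ must be handled using $\bar S$ in the same way.
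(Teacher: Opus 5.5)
Your plan follows the paper's intended route. The paper gives no separate proof of this corollary; it is meant to follow by rerunning the argument of Theorem~\ref{the:rmpc-convergence} (a Lyapunov--Krasovskii functional plus the S-procedure) with $(\bar P,\bar S)$ and $\tilde{\bar\Pi}(\bar P,\bar S)\preceq 0$, on top of the nominal convergence asserted in the preceding closed-loop robustness theorem. The difference is one of care rather than strategy. The paper silently drops the residual $\eta_k=B(\hat u_k-K\hat s_k-K_d\hat s_{k-\tau})$: it writes the closed loop as the autonomous system~\eqref{Eqn:linear_p-residual-no-dis} on the grounds that the nominal variables converge. It also never says where a strict decrease comes from, although $\bar Q=C^{\top}QC$ is singular. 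You instead keep $\eta_k$ as a vanishing input in an ISS-type argument, and you correctly locate the strict margin in the $q_sI$ term of $\bar\Pi_1$. The paper's version is shorter but only heuristic; yours can be turned into an actual proof once you close the following loose ends, which your extra care exposes.

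\emph{The disturbance bound.} The bound $\|\bar w_k\|\le\|s_k\|_{\bar L_s}+\|s_{k-\tau}\|_{\bar L_{s,d}}$ is not available ``exactly as in'' the paper. Under~\eqref{Eqn:realu} one has $u_k=Ks_k+K_ds_{k-\tau}+\mu_k$ with $B\mu_k=\eta_k$, so~\eqref{Eqn:dis-set} carries an extra $L_u\|\mu_k\|$. The S-procedure constraint therefore becomes $\|\bar w_k\|^2\le(1+\delta)\big(\|s_k\|^2_{\bar L_s}+\|s_{k-\tau}\|^2_{\bar L_{s,d}}\big)+c_\delta\|\mu_k\|^2$. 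The factor $1+\delta$, like your Young's-inequality terms, consumes slack in both the $s_k$ and the $s_{k-\tau}$ directions.

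\emph{Slack in the delayed direction.} $\tilde{\bar\Pi}\preceq 0$ supplies slack only in the $s_k$ block, so ``handled using $\bar S$ in the same way'' needs a concrete device. For instance, run the argument with $\bar S+\epsilon' I$, $0<\epsilon'<q_s$. This turns part of the $q_s$ margin into $-\epsilon'\|s_{k-\tau}\|^2$ and, for $\delta$ small, gives $W_{k+1}-W_k\le-\alpha\big(\|s_k\|^2+\|s_{k-\tau}\|^2\big)+c\|\mu_k\|^2$ with $\alpha>0$.

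\emph{The convergence step.} This decrease does not involve $s_{k-1},\dots,s_{k-\tau+1}$, so ``the decrease holds outside a ball'' does not apply to the functional. Instead, bound $W_{k+\tau+1}$ by $c\sum_{j=0}^{\tau}\|s_{k+j}\|^2+c\|\mu_{k+\tau}\|^2$, using one step of the dynamics for $s_{k+\tau+1}$. Summing the decrease over $\tau+1$ steps then gives $W_{k+\tau+1}\le\rho W_k+c\max_{0\le j\le\tau}\|\mu_{k+j}\|^2$ with $\rho<1$, whence $W_k\to 0$.

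\emph{Convergence of $\eta_k$.} $\eta_k\to 0$ requires $\hat s_{k|k}\to 0$ and $\hat s_{k-\tau|k}\to 0$. The cost decrement alone does not give this: $\bar Q$ is singular, and $\hat s_{k-\tau|k}$ is a re-optimized initial variable that is not penalized in $\bar V$. Your ``hence'' must therefore be a citation of the nominal-stability claim of the preceding theorem, exactly as in the paper, rather than a deduction.
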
\vspace{-2mm}
        		\begin{figure}[!t]
			\centering
			\includegraphics[width=2.2in]{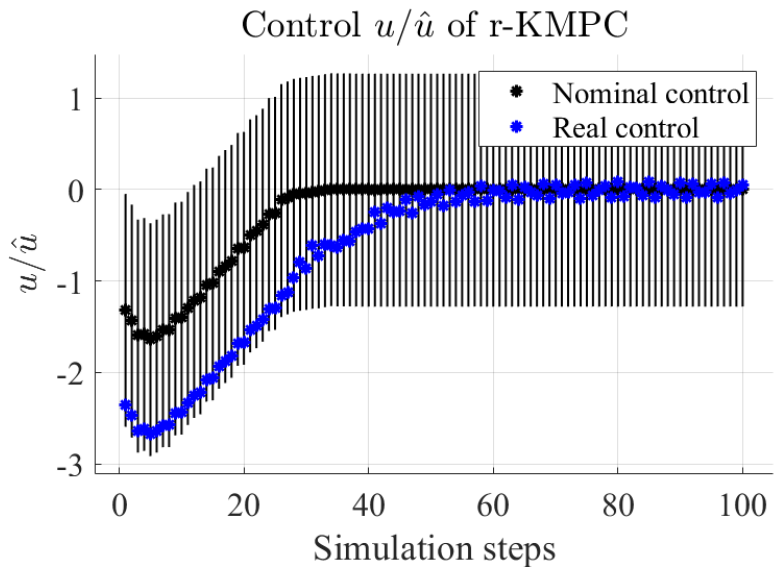}
			\caption{The control tubes of the time delayed inverted pendulum with r-KMPC.\vspace{-2mm}}
			\label{Fig: u-tube of IP with r-KMPC}
		\end{figure}
		
		\begin{figure}[!t]
			\centering
			\includegraphics[width=2.2in]{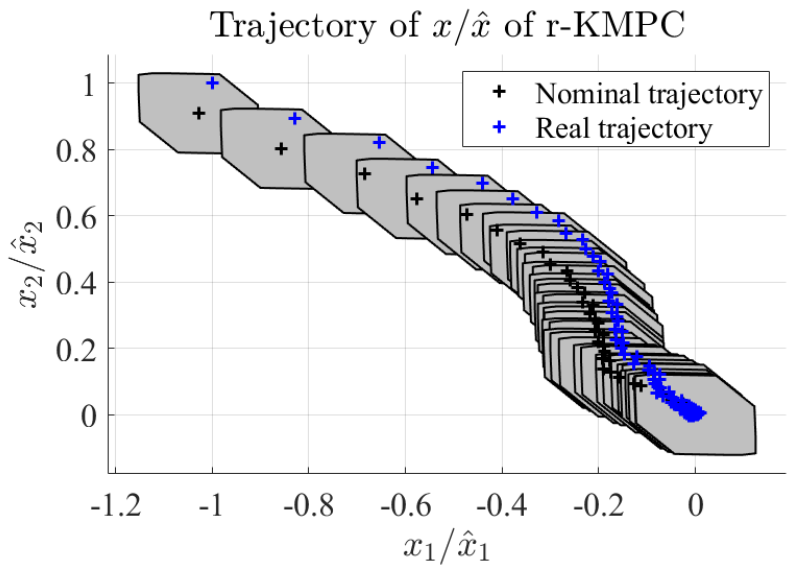}
			\caption{The real and nominal trajectory of the time delayed inverted pendulum with r-KMPC.\vspace{-2mm}}
			\label{Fig: x-tube of IP with r-KMPC}
		\end{figure}
		%
		
		\begin{figure}[!t]
			\centering
			\includegraphics[width=2.4in]{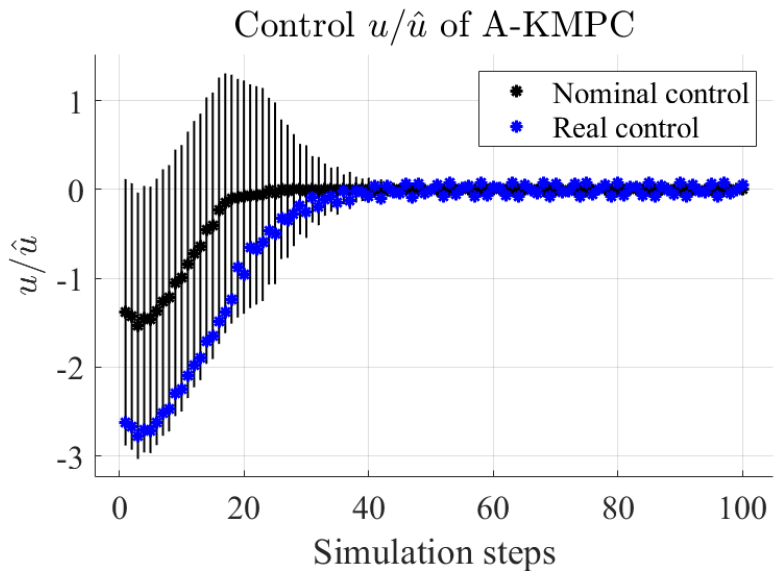}
			\caption{The control tubes of the time delayed inverted pendulum with A-KMPC.\vspace{-2mm}}
			\label{Fig: u-tube of IP with adaptive r-KMPC}
		\end{figure}
		\begin{figure}[!t]
			\centering
			\includegraphics[width=2.2in]{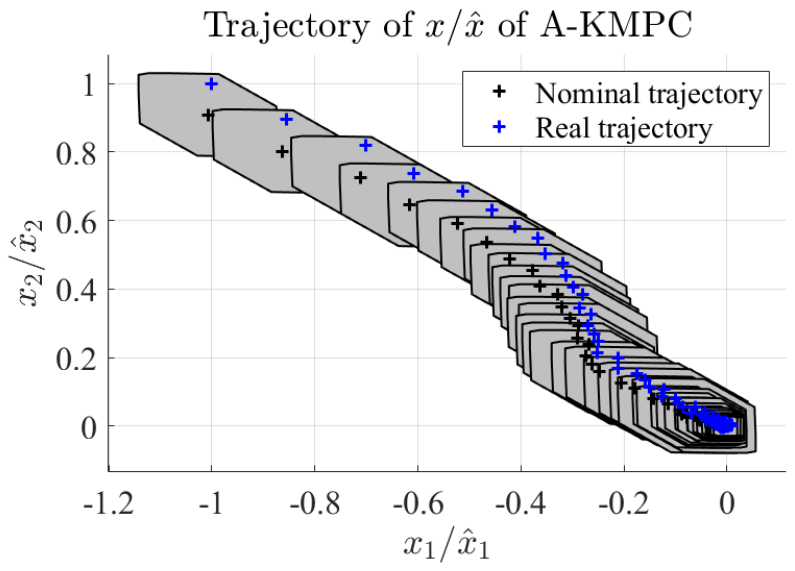}
			\caption{The real and nominal trajectory of the time-delay inverted pendulum with A-KMPC.}
			\label{Fig: x-tube of IP with adaptive r-KMPC}
		\end{figure}
		\section{Simulation results}\label{sec:simulation}
		\subsection{Control of a time delayed inverted pendulum}
		Consider the control of an inverted pendulum with state time delays, whose discrete-time model is presented as
		\begin{equation}
			\left[ \begin{array}{c}
				x_1^+\\
				x_2^+\\
			\end{array} \right] =\left[ \begin{array}{c}
				x_1 +\frac{3g}{4l}T\sin \left( x_{2,-\tau} \right) +\frac{3}{4ml^2}Tu\\
				x_2 +Tx_1\\
			\end{array} \right] +w_o,
		\end{equation}
		where $x_1$ and $x_2$ are the angular velocity and angle of the pendulum, $u$ is the angular acceleration, $\tau = 2$ is the delay step, the sampling time is $T=0.1s$, the state and control constraints are $x_1\in [-2\pi,2\pi]$, $x_2\in [-\pi,\pi]$, $u\in [-5,5]$. The mass of the pendulum is $m = \frac{1}{3} kg$, the length of the pendulum is $l = 1.5 \,m$, the gravity acceleration is $g = 9.8 \,m/s^2$, and $w_o=0.01 \sin({10t})$.\\
		We validated the proposed r-KMPC and A-KMPC algorithms successively. In both scenarios, we chose $\Psi(x)=(x_1,x_2,\sin(x_2))$. The weighting matrices $Q$ and $R$ were selected as $Q=\text{diag}\{1,1,0.1\}$, $R=0.1$. The prediction horizon was set as $N=20$. The simulation test has been performed in Matlab 2019a environment on a Laptop with Intel Core i9-9880H 2.30-GHz. The simulations were implemented with an initial condition $x_0=(-1,1)\in\mathcal{X}$. 
        		\begin{table}[!t]
			\caption{Performance comparison of time delayed inverted pendulum with A-KMPC and r-KMPC.}
			\label{Tab:Comparison of IP}
			\centering
            \scalebox{0.8}{
			\begin{tabular}{ccccc}
				\hline
				\multicolumn{2}{c}{Performance} & $J$ & $J_u$ & $J_x$\\
				\hline
				\multirow{4}{*}{A-KMPC}& $w_o=0$ & 18.07 & 93.56 & 8.71\\
				& $w_o=0.01\sin(10k)$ & 18.01 & 93.20 & 8.69\\
				& $w_o=0.01\cdot(\text{rands}(1))$ & 18.29 & 95.16 & 8.78\\
				& $w_o=0.05\sin(10k)$ & 18.35 & 96.47 & 8.70\\
				\hline
				\multirow{4}{*}{r-KMPC}& $w_o=0$ & 21.97 & 124.27 & 9.55\\
				& $w_o=0.01\sin(10k)$ & 21.88 & 123.69 & 9.51\\
				& $w_o=0.01\cdot(\text{rands}(1))$ & 20.15 & 111.81 & 8.97\\
				& $w_o=0.05\sin(10k)$ & 22.14 & 126.57 & 9.48\\
				\hline
			\end{tabular}}
		\end{table}
    		In implementing r-KMPC, the terminal matrices $P$ and $S$ and the feedback control matrices $K$ and $K_d$ were calculated by \eqref{Eqn:LYA}. The simulation results are shown in Figs. \ref{Fig: u-tube of IP with r-KMPC}-\ref{Fig: x-tube of IP with r-KMPC}, which shows that the nominal lifted state $\hat{s}$ and control $\hat{u}$ converge to the origin, while the real state $x$ and control $u$ converge to the neighborhood of the origin due to the influence of external disturbances. Also, r-KMPC ensures the satisfaction of the state and control constraints, and the robustness of the closed-loop system.\\		
		In implementing A-KMPC, we set $q_s=q_u=0.05$. The terminal matrices $\bar{P}$ and $\bar{S}$ were calculated by \eqref{Eqn:LYA-mod}. All other parameters were calculated according to Algorithm~\ref{alg:ra-KMPC}. The simulation results are shown in Figs. \ref{Fig: u-tube of IP with adaptive r-KMPC}-\ref{Fig: x-tube of IP with adaptive r-KMPC}, which shows that the nominal state $\hat{x}$ and control $\hat{u}$ converge to the origin, while the real state $x$ and control $u$ always satisfy the constraints. The numerical comparison between A-KMPC and r-KMPC is provided in Table \ref{Tab:Comparison of IP}. A-KMPC achieves a substantial reduction in cumulative cost compared to r-KMPC, with decreases of $17.7\%$, $24.7\%$, and $8.67\%$ in $J $, $J_u $, and $J_x$, respectively. In summary, A-KMPC with adaptive sets updates effectively reduces conservatism and enhances control performance. {\color{black}However, the performance improvement comes at the cost of increased computation time (see Table~\ref{Tab:Comparison of IP}). Nevertheless, the reported 19.67 ms for A-KMPC with $N=10$ remains acceptable for many robotic control applications, such as autonomous vehicles with typical sampling intervals of 20--50 ms. Further speedup is possible with optimized C/C++ implementations.}
         \begin{table}[H]
			\caption{Comparison of average computational time (ms).}
			\label{Tab:Comparison of IP}
			\centering
            \scalebox{0.9}{
			\begin{tabular}{ccccc}
				\hline
				\multicolumn{1}{c}{Methods} & $N=5$ & $N=8$ & $N=10$\\
				\hline
				\multirow{1}{*}{A-KMPC}
                & 10.78& 14.82& 19.67\\
				\hline
				\multirow{1}{*}{r-KMPC}
                & 4.33& 4.71& 4.96\\
				\hline
			\end{tabular}}
		\end{table}
		%
		%
		%
		%
        \vspace{-4mm}
		\subsection{Control of a time delayed chemical reactor}
		The chemical reactor is also used to evaluate the effectiveness of the proposed r-KMPC and A-KMPC algorithms, whose discrete-time model is
		\begin{equation}
			\begin{aligned}
				x_1^+= & \left[1-T\left(\frac{1}{\theta_1}+a_1\right)\right] x_1+T \frac{\left(1-R_2\right)}{\varpi_1} x_2\\
				&+T\varrho \sin (kT) x^2_{2,-\tau}+w_{o1}  \\
				x_2^+= & \left[1-T\left(\frac{1}{\theta_2}+a_2\right)\right] x_2+T \frac{R_1}{\varpi_2} x_{1,-\tau}+T \frac{R_2}{\varpi_2} x_{2,-\tau} \\
				&+T\varrho \sin (kT) x_{1,-\tau}^2+T \frac{G_1}{\varpi_2} u+w_{o2}.
			\end{aligned}
		\end{equation}
		where the sampling time is $T=0.1$, the delay step is $\tau=5$, $\theta_1=\theta_2=1, a_1=a_2=1, \varpi_1=\varpi_2=1, R_1=R_2=0.5, G_1=G_2=0.5$, $\varrho=1$, the state and control constraints are $x_1\in [-2,2]$, $x_2\in [-2,2]$, $u\in [-1,1]$, and $w_{o1}=w_{o2}=0.05x_2(t)$.\\
        		\begin{table}[!t]
			\caption{Performance comparison of time delayed chemical reactor with A-KMPC and r-KMPC.}
			\label{Tab:Comparison of CR}
			\centering
            \scalebox{0.9}{
			\begin{tabular}{ccccc}
				\hline
				\multicolumn{2}{c}{Performance} & $J$ & $J_u$ & $J_x$\\
				\hline
				\multirow{4}{*}{A-KMPC}& $w_o=0$ & 4.55 & 4.75 & 4.08\\
				& $w_o=0.05x_2$ & 6.26 & 8.06 & 5.45\\
				& $w_o=0.01\cdot(\text{rands}(1))$ & 4.63 & 4.93 & 4.14\\
				& $w_o=0.01\sin(10k)$ & 4.55 & 4.75 & 4.08\\
				\hline
				\multirow{4}{*}{r-KMPC}& $w_o=0$ & 4.59 & 4.11 & 4.18\\
				& $w_o=0.05x_2$ & 6.41 & 7.26 & 5.68\\
				& $w_o=0.01\cdot(\text{rands}(1))$ & 4.67 & 4.27 & 4.24\\
				& $w_o=0.01\sin(10k)$ & 4.59 & 4.12 & 4.18\\
				\hline
			\end{tabular}}
		\end{table}
		To learn the Koopman model of the chemical reactor, the lifted function was constructed as $\Psi(x)=(x_1,x_2,x_1^2,x_2^2,x_1x_2)$. The weighting matrices $Q$ and $R$ were selected as $Q=\text{diag}\{1,1,0.1,0.1,0.1\}$ and $R=0.1$. The prediction horizon was set to $N=11$. The initial condition is $x_0=(-1.2,1.2)$ and we set $q_s=q_u=0.2$.\\
		The simulation results are presented in Figs.~\ref{Fig: x-tube of CR with r-KMPC}-\ref{Fig: x-tube of CR with adaptive r-KMPC}, demonstrating that both r-KMPC and A-KMPC ensure convergence of the nominal system, constraint satisfaction, and closed-loop robustness. Furthermore, the performance comparison in Table~\ref{Tab:Comparison of CR} indicates that A-KMPC outperforms r-KMPC. This improvement is attributed to A-KMPC’s ability to update state and control constraints online, thereby reducing conservatism and enhancing control performance. 
		
		%
		%
		%
           \begin{figure}[!t]
			\centering
			\includegraphics[width=2.2in]{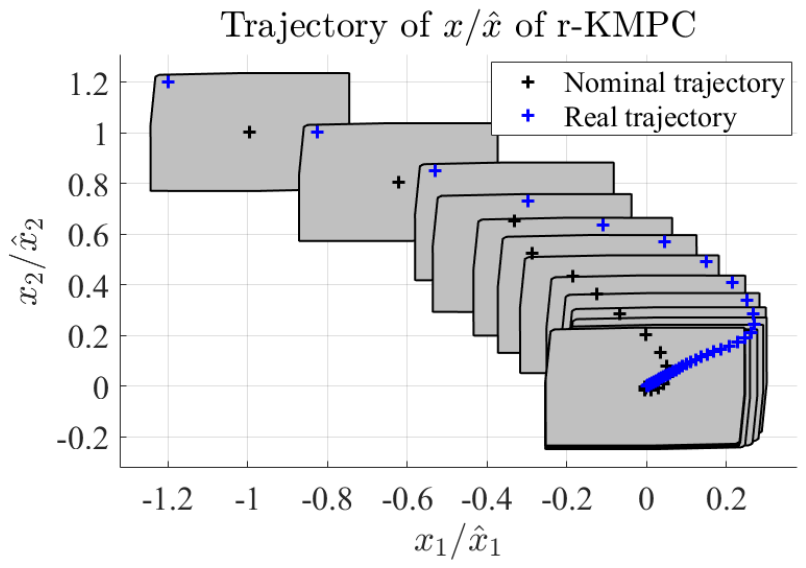}
			\caption{The real and nominal trajectory of the time delayed chemical reactor with r-KMPC.\vspace{-1mm}}
			\label{Fig: x-tube of CR with r-KMPC}
		\end{figure}
            \begin{figure}[!t]
			\centering
			\includegraphics[width=2.2in]{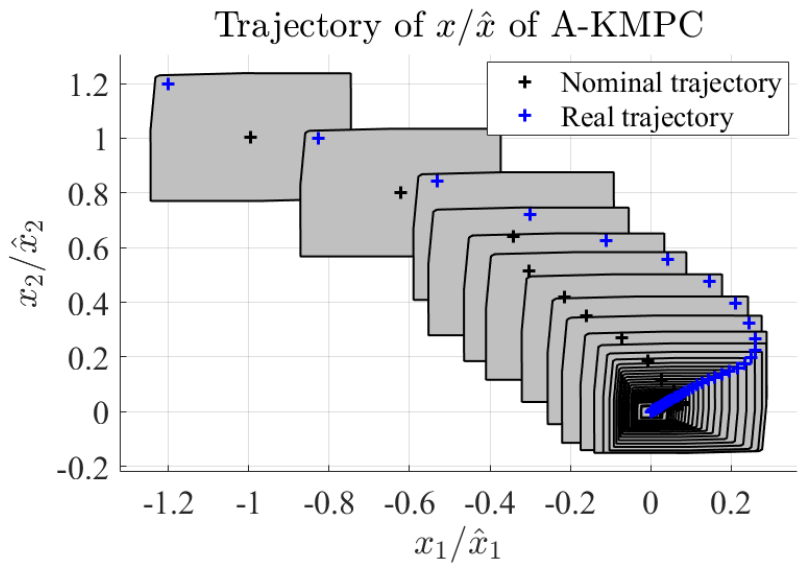}
			\caption{The real and nominal trajectory of the time delayed chemical reactor with A-KMPC.\vspace{-1mm}}
			\label{Fig: x-tube of CR with adaptive r-KMPC}
		\end{figure}
		\section{Conclusions}\label{sec:con}
		In this paper, we proposed a robust adaptive MPC scheme leveraging Koopman operators for nonlinear time delayed systems. By constructing a lifted time delayed Koopman model in the feature space, we first developed an r-KMPC algorithm that guarantees recursive feasibility and closed-loop robustness. Building upon this framework, we further introduced an adaptive robust Koopman MPC variant (A-KMPC), which incorporates online updates of uncertainty sets to reduce the conservatism typically associated with robust MPC designs. We theoretically established closed-loop robustness under exogenous disturbances and asymptotic convergence under nominal conditions. Through two numerical case studies, we demonstrate that both r-KMPC and A-KMPC ensure robust performance. Notably, A-KMPC has achieved significant improvements in control performance and reduced conservatism via real-time set adaptation compared to r-KMPC.
		
		
		\bibliographystyle{unsrt}
		\bibliography{refletter}
		
		
		

	\end{document}